\DeclareMathOperator{\Cr}{\mathrm{Cr}}
\DeclareMathOperator{\PGL}{\mathrm{PGL}}
\DeclareMathOperator{\GL}{\mathrm{GL}}
\DeclareMathOperator{\Aut}{\mathrm{Aut}}
\DeclareMathOperator{\Char}{\mathrm{char}}
\begin{document}

\newcommand{\Sm}{\mathcal{S}^{\!}\mathsf{m}_k}
\newcommand{\Rings}{\mathcal{R}^{\!}\mathsf{ings}^*}
\newcommand{\rings}{\mathcal{R}^{\!}\mathsf{ings}}
\newcommand{\SmE}[1]{\mathcal{S}^{\!}\mathsf{m}_{#1}}
\newcommand{\Mot}[1]{\mathcal M^{\!}\mathsf{ot}_{#1}}
\newcommand{\Corr}[1]{\mathcal C^{\!}\mathsf{orr}_{#1}}
\newcommand{\MotF}[1]{\mathcal M_{#1}}
\newcommand{\QG}{\mathbb Q\Gamma}
\newcommand{\Inv}{\mathrm{Inv}(\QG)}
\newcommand{\Gal}{\mathrm{Gal}}
\newcommand{\End}{\mathrm{End}}
\newcommand{\M}[1]{\mathcal{M}_{#1}}
\newcommand{\EG}{\!\,_EG}
\newcommand{\EGP}{\!\,_E(G/P)}
\newcommand{\EX}{\!\,_EX}
\newcommand{\XG}{\!\,_{\xi}G}
\newcommand{\XGP}{\!\,_{\xi}(G/P)}
\newcommand{\KQ}[1]{\mathrm K(n)^*\big(#1;\,\mathbb Q[v_n^{\pm1}]\big)}
\newcommand{\KZ}[1]{\mathrm K(n)^*\big(#1;\,\mathbb Z_{(p)}[v_n^{\pm1}]\big)}
\newcommand{\KZp}[1]{\mathrm K(n)^*\big(#1;\,\mathbb Z_p[v_n^{\pm1}]\big)}
\newcommand{\KF}[1]{\mathrm K(n)^*\big(#1;\,\mathbb F_p[v_n^{\pm1}]\big)}
\newcommand{\CHQ}[1]{\mathrm{CH}^*\big(#1;\,\mathbb Q[v_n^{\pm1}]\big)}
\newcommand{\KXZ}[1]{\!\,^{\mathrm K(n)\!}#1_{\,\mathbb Z_{(p)}[v_n^{\pm1}]}}
\newcommand{\KMotQ}{\Mot{\,\mathrm K(n)}}
\newcommand{\CHMotQv}{\Mot{\,\mathrm{CH}}}
\newcommand{\KXQ}[1]{\mathcal M_{\mathrm K(n)}(#1)}
\newcommand{\KMQ}{\mathcal M_{\,\mathrm K(n)}}
\newcommand{\CHMQv}{\MotF{\,\mathrm{CH}}}
\newcommand{\KCorrQ}{\Corr{\,\mathrm K(n)}}
\newcommand{\CHCorrQv}{\Corr{\,\mathrm{CH}}}
\newcommand{\CHCorrQ}{\Corr{\,\mathrm{CH}}}
\newcommand{\AMot}{\Mot A}

\newcommand{\e}{\varepsilon}
\newcommand{\con}{\ensuremath{\triangledown}}
\newcommand{\ra}{\ensuremath{\rightarrow}}
\newcommand{\tp}{\ensuremath{\otimes}}
\newcommand{\pr}{\ensuremath{\partial}}
\newcommand{\trigd}{\ensuremath{\triangledown}}
\newcommand{\dAB}{\ensuremath{\Omega_{A/B}}}
\newcommand{\QQ}{\ensuremath{\mathbb{Q}}\xspace}
\newcommand{\CC}{\ensuremath{\mathbb{C}}\xspace}
\newcommand{\RR}{\ensuremath{\mathbb{R}}\xspace}
\newcommand{\ZZ}{\ensuremath{\mathbb{Z}}\xspace}
\newcommand{\Zp}{\ensuremath{\mathbb{Z}_{(p)}}\xspace}
\newcommand{\Z}[1]{\ensuremath{\mathbb{Z}_{(#1)}}\xspace}
\newcommand{\NN}{\ensuremath{\mathbb{N}}\xspace}
\newcommand{\LL}{\ensuremath{\mathbb{L}}\xspace}
\newcommand{\inN}{\ensuremath{\in\mathbb{N}}\xspace}
\newcommand{\inQ}{\ensuremath{\in\mathbb{Q}}\xspace}
\newcommand{\inR}{\ensuremath{\in\mathbb{R}}\xspace}
\newcommand{\inC}{\ensuremath{\in\mathbb{C}}\xspace}
\newcommand{\OO}{\ensuremath{\mathcal{O}}\xspace}
\newcommand{\rarr}{\rightarrow}
\newcommand{\Rarr}{\Rightarrow}
\newcommand{\xrarr}[1]{\xrightarrow{#1}}
\newcommand{\larr}{\leftarrow}
\newcommand{\lrarr}{\leftrightarrows}
\newcommand{\rlarr}{\rightleftarrows}
\newcommand{\rrarr}{\rightrightarrows}
\newcommand{\al}{\alpha}
\newcommand{\bt}{\beta}
\newcommand{\ld}{\lambda}
\newcommand{\om}{\omega}
\newcommand{\Kd}[1]{\ensuremath{\Omega^{#1}}}
\newcommand{\KKd}{\ensuremath{\Omega^2}}
\newcommand{\vd}{\partial}
\newcommand{\PC}{\ensuremath{\mathbb{P}_1(\mathbb{C})}}
\newcommand{\PPC}{\ensuremath{\mathbb{P}_2(\mathbb{C})}}
\newcommand{\derz}{\ensuremath{\frac{\partial}{\partial z}}}
\newcommand{\derw}{\ensuremath{\frac{\partial}{\partial w}}}
\newcommand{\mb}[1]{\ensuremath{\mathbb{#1}}}
\newcommand{\mf}[1]{\ensuremath{\mathfrak{#1}}}
\newcommand{\mc}[1]{\ensuremath{\mathcal{#1}}}
\newcommand{\id}{\ensuremath{\mbox{id}}}
\newcommand{\dd}{\ensuremath{\delta}}
\newcommand{\bu}{\bullet}
\newcommand{\ot}{\otimes}
\newcommand{\boxt}{\boxtimes}
\newcommand{\op}{\oplus}
\newcommand{\mt}{\times}
\newcommand{\Gm}{\mathbb{G}_m}
\newcommand{\Ext}{\ensuremath{\mathrm{Ext}}}
\newcommand{\Tor}{\ensuremath{\mathrm{Tor}}}

\newcommand{\Hom}{\ensuremath{\mathrm{Hom}}}

\newcommand{\kn}[1]{\mathrm K(n)^*(#1)}
\newcommand{\ckn}[1]{\mathrm{CK}(n)^*(#1)}
\newcommand{\grckn}[1]{\mathrm{gr}_\tau^{*}\,\mathrm{CK}(n)^{*}(#1)}
\newcommand{\so}{\mathrm{SO}_m}
\newcommand{\pt}{\mathrm{pt}}
\newcommand{\tr}{\mathrm{tr}}
\newcommand{\sic}{\mathrm{sc}}
\newcommand{\ad}{\mathrm{ad}}
\newcommand{\sr}{\mathrm{sr}}
\newcommand{\St}{\mathrm{St}}
\newcommand{\SK}{\mathrm{SK}}
\newcommand{\SL}{\mathrm{SL}}
\newcommand{\Cp}{\mathrm{Cp}}
\newcommand{\Sp}{\mathrm{Sp}}
\newcommand{\Ep}{\mathrm{Ep}}
\newcommand{\Spin}{\mathrm{Spin}}

\newcommand{\A}{\mathsf{A}}
\newcommand{\C}{\mathsf{C}}

\def\GF#1{{\mathbb F}_{\!#1}}

\newcommand{\Pic}{\mathrm{Pic}}



\newtheorem{lm}{Lemma}[section]
\newtheorem{lm*}{Lemma}
\newtheorem*{tm*}{Theorem}
\newtheorem*{tms*}{Satz}
\newtheorem{tm}[lm]{Theorem}
\newtheorem{prop}[lm]{Proposition}
\newtheorem*{prop*}{Proposition}
\newtheorem{prob}{Problem}
\newtheorem{cl}[lm]{Corollary}
\newtheorem*{cor*}{Corollary}
\newtheorem{conj}{Conjecture}
\theoremstyle{remark}
\newtheorem*{rk*}{Remark}
\newtheorem*{rm*}{Remark}
\newtheorem{rk}[lm]{Remark}
\newtheorem*{xm}{Example}

\theoremstyle{definition}
\newtheorem{df}{Definition}
\newtheorem*{nt}{Notation}
\newtheorem{Def}[lm]{Definition}
\newtheorem*{Def-intro}{Definition}
\newtheorem{Rk}[lm]{Remark}
\newtheorem{Ex}[lm]{Example}

\newtheorem{Qu}[lm]{Question}

\theoremstyle{plain}
\newtheorem{Th}[lm]{Theorem}
\newtheorem*{Th*}{Theorem}
\newtheorem*{Th-intro}{Theorem}
\newtheorem{Prop}[lm]{Proposition}
\newtheorem*{Prop*}{Proposition}
\newtheorem{Cor}[lm]{Corollary}
\newtheorem{Lm}[lm]{Lemma}
\newtheorem*{Conj}{Conjecture}
\newtheorem*{BigTh}{Classification of Operations Theorem  (COT)}
\newtheorem*{BigTh-add}{Algebraic Classification of Additive Operations Theorem  (CAOT)}

\newtheorem{maintheorem}{Theorem}
\renewcommand{\themaintheorem}{\Alph{maintheorem}}

\newtheorem*{ThA}{Theorem A}
\newtheorem*{ThB}{Theorem B}
\newtheorem*{ThSh}{Shafarevich's Theorem}

\newcommand\rA{\mathsf A}
\newcommand\rB{\mathsf B}
\newcommand\rC{\mathsf C}
\newcommand\rD{\mathsf D}
\newcommand\rE{\mathsf E}
\newcommand\rF{\mathsf F}
\newcommand\rG{\mathsf G}

\newcommand{\ep}{\epsilon}
\newcommand{\be}{\beta}
\newcommand{\ga}{\gamma}
\newcommand{\de}{\delta}
\newcommand{\la}{\lambda}
\newcommand{\vp}{\varphi}
\newcommand{\st}{\sigma}
\newcommand{\eps}{\varepsilon}

\newcommand{\lra}{\longrightarrow}
\newcommand{\idd}{\mathop{\rm{id}}\nolimits}
\newcommand{\Real}{{\mathbb R}}
\newcommand{\Co}{{\mathbb C}}
\newcommand{\komp}{{\mathbb C}}
\newcommand{\Int}{{\mathbb Z}}
\newcommand{\Nat}{{\mathbb N}}
\newcommand{\Rat}{{\mathbb Q}}

\def\mq{{\mathfrak q}}

\def\bark{\bar k}




\title[Linearization of finite subgroups of Cremona groups]{
Linearization of finite subgroups of Cremona groups\\ over non-closed fields
}

\author{Boris Kunyavski\u\i }
\address{%
Department of Mathematics \\
Bar-Ilan University \\
5290002 Ramat Gan, Israel
}
\email{Boris.Kunyavskii@biu.ac.il, kunyav@macs.biu.ac.il}

\subjclass{14E07, 14G12, 14G25, 20G15}

\keywords{Cremona group; birational involution; algebraic torus; permutation module}

\thanks{Author's research was supported by the ISF grant 1994/20.
Part of this work was accomplished when he was visiting the IHES (Bures-sur-Yvette), Sorbonne Universit\'e (Paris),
and the MPIM (Bonn). Support of these institutions is gratefully acknowledged.}

\begin{abstract}
We study linearizability properties of finite subgroups of the Cremona group $\Cr_n(k)$
in the case where $k$ is a global field, with the focus on the local-global principle.
For every global field $k$ of characteristic different from 2
and every $n \ge 3$ we give an example of a birational involution of $\mathbb P^n_k$
(=an element $g$ of order $2$ in $\Cr_n(k)$) such that
$g$ is not $k$-linearizable but $g$ is $k_v$-linearizable in $\Cr_n(k_v)$ for all places $v$ of $k$.
The main tool is a new birational invariant generalizing those introduced by Manin and Voskresenski\u\i\
in the arithmetic case and by Bogomolov--Prokhorov in the geometric case. We also apply it to
the study of birational involutions in real plane.
\end{abstract}

\maketitle

\section{Introduction}

The main object of our interest in the present paper is the Cremona group $\Cr_n(k)$.
It is defined as the group of birational automorphisms of the projective space $\mathbb P^n_{k}$.
More precisely, our focus is on {\it linearizability} properties of subgroups $G$ of $\Cr_n (k)$.


\begin{Def}
We say that $G\subset \Cr_n (k)$ is {\emph{projectively $k$-linearizable}} if it is conjugate in $\Cr _n(k)$ to
a subgroup $G'$ of $\PGL_{n+1}(k)$. If one can choose $G'$ so that the faithful projective representation
$G'\hookrightarrow \PGL_{n+1}(k)$ can be lifted to a linear representation $G'\to \GL(n,k)$, we say that
$G$ is {\emph{$k$-linearizable}}.


\end{Def}

Since $\PGL_{n+1}(k)$ is the group of automorphisms of $\mathbb P^n_{k}$, one can rephrase this saying that
$\mathbb P^n_{k}$ admits a faithful regular action of $G$. More precisely, given $G\subset\Cr_n (k)$, we say
that $G$ is {\it $k$-regularizable} if there exist a smooth projective $k$-variety $X$ and a $k$-birational map
$\varphi \colon X\dashrightarrow \mathbb P^n_{k}$
such that $\varphi^{-1}\circ G\circ \varphi\subset \Aut (X)$.
Thus, if one can take $X=\mathbb P^n_{k}$, then
$G$ is projectively $k$-linearizable.

Not all subgroups $G\subset\Cr_n (k)$ are $k$-regularizable, see
various examples in \cite{Bru}, \cite{Ch} (attributed there to Dolgachev), \cite{Ca}. Moreover,
the set $R$ of regularizable subgroups is meagre enough: recently Lin and Shinder \cite{LS} showed that
$R$ may not (and normally does not) generate $\Cr_n (k)$, thus refuting a long-standing conjecture.

However, for {\it finite} subgroups $G\subset \Cr_n (k)$ the situation is entirely different: {\it any} such $G$
is regularizable, at least if $\Char (k)=0$, see, e.g. \cite{Ch}.

In the present paper, we assume throughout that $G$ is {\it finite}. Our general goal is to study the behaviour of
the linearizability of $G$ under base change, when $k$ is a subfield of $K$, and we compare $G\subset \Cr_n (k)$
to its image $G_K$ with respect to the natural inclusion $\Cr_n (k)\subset \Cr_n(K)$. This topic
got certain attention in the classical complexification-realification context, when $k=\mathbb R$, $K=\mathbb C$,
see \cite{Ya1}, \cite{Ya2}, \cite{CMYZ}. We shall also touch this situation though our focus is on another, arithmetical
set-up. Namely, our main interest is in the validity of the following local-to-global principle.

\begin{Qu}  \label{q:main}
Let $k$ be a global field, let $G\subset \Cr_n (k)$ be a finite subgroup,
and for each place $v$ of $k$ let $G_v$ denote the image of $G$ in $\Cr_n (k_v)$
with respect to the natural inclusion $k\subset k_v$. Suppose that $G_v$ is
(projectively) $k_v$-linearizable for every $v$. Is it true that $G$ is (projectively) $k$-linearizable?
\end{Qu}

Our main result answers this question in the negative for any global field $k$ of characteristic different from 2
and any $n\ge 3$. It turns out that counter-examples can be found among subgroups of order $2$.

\begin{ThA}  \label{thA} (cf. Theorem \ref{th:main})
For any $n\ge 3$ and any global field $k$ of characteristic different from 2
there exists a birational involution
of $\mathbb P^n_{k}$ $($= an element $g$ of order $2$ in $\Cr_n (k))$ such that

$\bullet$ $g$ is $k_v$-linearizable for all $v$;

$\bullet$ $g$ is not $k$-linearizable.
\end{ThA}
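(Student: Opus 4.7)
The strategy is to isolate a cohomological birational invariant $\mathrm{Sh}(X,g)$ of a pair $(X,g)$ (a smooth projective $k$-regularization of $g\in\Cr_n(k)$) that simultaneously refines the Manin--Voskresenski\u\i\ invariant (Galois cohomology of $\Pic$) and the Bogomolov--Prokhorov invariant ($H^1(\langle g\rangle,\Pic)$), and then to exhibit an explicit involution for which this invariant is locally trivial at every place but globally nontrivial. Concretely, let $\Gamma_k\times\langle g\rangle$ act on $\Pic(\bar X)$ in the natural bi-equivariant way (with $\Gamma_k=\Gal(\bar k/k)$), and set
\[
\mathrm{Sh}(X,g)\;=\;\ker\!\Bigl(H^1\bigl(\Gamma_k\times\langle g\rangle,\,\Pic(\bar X)\bigr)\longrightarrow\prod_v H^1\bigl(\Gamma_{k_v}\times\langle g\rangle,\,\Pic(\bar X)\bigr)\Bigr).
\]

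The two formal properties needed of $\mathrm{Sh}$ are then standard. For \emph{birational invariance}: any $\langle g\rangle$-equivariant blow-up along a smooth invariant centre enlarges $\Pic(\bar X)$ by a permutation summand whose $H^1$ vanishes, so weak factorisation identifies $\mathrm{Sh}$ across any two regularizations and it becomes an invariant of the conjugacy class of $g$ in $\Cr_n(k)$. For \emph{vanishing on linearizable classes}: if $g$ is $k$-linearizable, take $X=\mathbb P^n_k$; then $\Pic(\bar X)=\mathbb Z$ has trivial action of both factors and $\mathrm{Sh}(X,g)=0$, and the same at each completion $k_v$ shows that local linearizability kills the image of $\mathrm{Sh}$ under restriction to $H^1(\Gamma_{k_v}\times\langle g\rangle,\Pic(\bar X))$. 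So any nonzero element of $\mathrm{Sh}(X,g)$ produces the desired counter-example.

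The heart of the argument is the construction of an explicit pair. A natural family is the smooth compactifications of double covers $y^2=f(x_1,\ldots,x_n)$ equipped with the sheet-switching involution $g\colon y\mapsto -y$: here $\Pic(\bar X)$ is computable from the branch divisor. Now engineer the Galois structure of the splitting of $f$ over $\bar k$ so that it is governed by a biquadratic extension $L/k$ with nontrivial Hasse norm failure (such extensions exist over any global field of characteristic $\ne 2$), and propagate the norm-principle failure of $L/k$ through the Hochschild--Serre spectral sequence for $\Gamma_k\times\langle g\rangle$ to obtain a nonzero class in $\mathrm{Sh}(X,g)$. Local linearizability at each $v$ is verified by noting that over $k_v$ the local surjectivity of the norm supplies a rational section that allows an explicit birational change of coordinates bringing $g$ into linear form. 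Finally the passage from a single example in small dimension to all $n\ge 3$ is handled by crossing with $\mathbb P^{n-3}_k$ on which $g$ acts trivially, which leaves $\mathrm{Sh}$ unchanged.

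The main obstacle lies in the module-theoretic core of the construction: one must compute $\Pic(\bar X)$ explicitly as a $\Gamma_k\times\langle g\rangle$-module and verify that the classical biquadratic Hasse-norm failure genuinely survives \emph{after} being paired with the involution action and \emph{after} quotienting out the permutation-module ambiguity intrinsic to the birational invariance. Classical inputs (Châtelet surfaces, norm-one tori of biquadratic extensions) supply the required Galois-cohomological ingredient, but the amalgamation with $\langle g\rangle$ forces a delicate choice of branch data; the exclusion of characteristic $2$ is natural from this vantage, since both the double-cover formalism and the flasque-resolution techniques degenerate there.
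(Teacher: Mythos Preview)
Your overall framework---letting $W=\langle g\rangle\times\Gamma_k$ act on $\Pic(\bar X)$ and extracting a cohomological obstruction to linearizability---is precisely the paper's Theorem~B. But there are substantive gaps between your sketch and a proof.

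First, your invariant $\mathrm{Sh}$, built from $H^1$, is strictly weaker than what the paper needs. The paper shows that linearizability forces $\Pic(\bar X)$ to be a \emph{stably permutation} $W$-module, and then exhibits an example in which this module is flasque \emph{and coflasque}---so $H^1(U,\Pic\bar X)=0$ for every subgroup $U\le W$, and in particular your $\mathrm{Sh}$ vanishes---yet is not invertible, hence not stably permutation. The obstruction in the paper's example is invisible to $H^1$; with your invariant you would need a genuinely different example, and you have not produced one.

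Second, your construction is only a plan. You yourself identify ``the main obstacle'' as the explicit computation of $\Pic(\bar X)$ as a $W$-module and the survival of the Hasse-norm class, and you do not carry it out. For a double cover $y^2=f$ to give an element of $\Cr_n(k)$ it must be $k$-rational, which you do not address; and ``local surjectivity of the norm supplies a rational section'' is not an argument for local linearizability. The paper's construction is concrete and different: take the $k$-rational torus $T=R_{L/k}\mathbb G_{\mathrm m}/\mathbb G_{\mathrm m}$ for a biquadratic $L/k$ chosen with all decomposition groups cyclic, equip it with $g(t)=t^{-1}$, and identify the resulting $W$-action on the character lattice with an explicit subgroup of $\GL(3,\mathbb Z)$ already known (from the classification of three-dimensional tori) to give a non-invertible flasque module. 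Local linearizability is then proved by hand: $T_v$ decomposes into one- and two-dimensional tori, and each factor is linearized explicitly (the two-dimensional case via a lemma on quadrics with a $G$-fixed $k_v$-point).

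Finally, your birational-invariance step via weak factorization is only available in characteristic zero, whereas the statement covers all global fields of characteristic $\ne 2$. The paper proves invariance (similarity of $W$-modules) without resolution, adapting the Moret-Bailly/Colliot-Th\'el\`ene--Sansuc argument.
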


The proof contains several ingredients of different nature. The first is a general
statement providing a necessary condition for the linearizability. It is formulated in the
set-up of equivariant birational geometry over an arbitrary field, and I believe
it is interesting in its own right.

Let $k$ be a field, and
let $X$, $Y$ be smooth, projective, integral $k$-varieties equipped with
a faithful $k$-regular action of a finite group (=constant $k$-group scheme) $G$ (for short,
{\it {$G$-varieties}}).  We say that
 $G$-varieties $X$, $Y$ are
{\it {$G$-birational}} if there exists a $k$-birational map
$\varphi\colon X \dashrightarrow Y$ {\it equivariant} with respect to the action of $G$. We say that
a smooth, projective, integral $k$-variety
$X$ is {\it {$k$-linearizable}} if it is $G$-birational to $Y=\mathbb P(V)$ where the $G$-action on $Y$
is induced by a linear action of $G$ on a vector $k$-space $V$.

Let $\bar k$ denote a separable closure of $k$.
Denote $\overline X=X\times_{k}\bark$.
Let $\Gamma=\Gal(k)=\Gal(\bark/k)$ be the absolute Galois group of $k$.
The actions of $G$ and $\Gamma$ on $\overline X$ commute, so that we can define the action
of $W:=G\times\Gamma$ on $\overline X$ and extend it to the geometric Picard group
$\Pic (\overline X)$. Denote by $N$ the $W$-module $\Pic (\overline X)$.

Suppose that $N$, viewed as a $\mathbb Z$-module, is a free module of finite rank (this holds, say,
if $\overline X$ is a rational variety). We say that $N$ is a {\it permutation} module if it has a free $\mathbb Z$-basis
globally respected by $W$. We say that $N$ is {\it stably permutation} if there exist permutation modules $S$, $S'$ such that
$N\oplus S\cong S'$. We say that $W$-modules $N$ and $N'$ are {\it{similar}} if there exist permutation modules $S$, $S'$ such that
$N\oplus S\cong N'\oplus S'$.(More properties of a similar flavour will be introduced in Section \ref{sec:prelim}.)

The following theorem, the proof of which (as well as of its stronger version, Theorem \ref{th:crit}) 
was communicated to me by Jean-Louis Colliot-Th\'el\`ene, is crucial for the proof of Theorem A.

\begin{ThB} \label{thB}  (cf. Theorem \ref{th:crit})
With the above notation,  suppose that the $G$-variety $X$ is  $k$-linearizable.
Then $N=\Pic (\overline X)$
is a stably permutation $W$-module.
 \end{ThB}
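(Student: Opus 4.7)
The plan is to construct a common $G$-equivariant smooth projective resolution of $X$ and $Y=\mathbb P(V)$, and then use the standard decomposition of the Picard group of a birational cover of smooth projective varieties by its exceptional divisors.

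First, I would observe that $\mathrm{Pic}(\overline Y) = \mathbb Z\cdot[\mathcal O(1)]$ carries the trivial $W$-action: $G$ preserves $\mathcal O(1)$ because it acts linearly on $V$, and $V$ is defined over $k$, so $\Gamma$ acts trivially on $[\mathcal O(1)]$ as well. Thus $\mathrm{Pic}(\overline Y)$ is already a permutation $W$-module (of rank one), and it suffices to compare $\mathrm{Pic}(\overline X)$ with $\mathrm{Pic}(\overline Y)$ modulo permutation summands.

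Second, starting from a $G$-equivariant birational map $\varphi\colon X\dashrightarrow Y$ witnessing the $k$-linearizability of $X$, I would take the closure of its graph in $X\times_k Y$, on which $G$ acts diagonally, and apply $G$-equivariant Hironaka resolution over $k$ (available in characteristic zero since $G$ is a finite constant $k$-group scheme) to obtain a smooth projective $G$-variety $Z/k$ together with $G$-equivariant birational morphisms $f\colon Z\to X$ and $g\colon Z\to Y$. The third step is the main computation: for a $G$-equivariant birational morphism $h\colon Z\to X$ of smooth projective $k$-varieties, the classical decomposition of the Picard group by exceptional divisors yields, after base change to $\bark$, a direct-sum decomposition of $W$-modules
\[
\mathrm{Pic}(\overline Z)\;\cong\; h^*\mathrm{Pic}(\overline X)\;\oplus\; \bigoplus_{i}\mathbb Z E_i,
\]
where the $E_i$ are the irreducible geometric exceptional divisors of $h$. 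Since $W$ permutes the $E_i$, the exceptional summand is a permutation $W$-module. Applying this to both $f$ and $g$ produces permutation $W$-modules $P_X,P_Y$ with
\[
\mathrm{Pic}(\overline X)\oplus P_X \;\cong\; \mathrm{Pic}(\overline Z) \;\cong\; \mathrm{Pic}(\overline Y)\oplus P_Y,
\]
and the right-hand side is permutation. Hence $N=\mathrm{Pic}(\overline X)$ is stably permutation.

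The main obstacle is justifying the direct-sum decomposition in the third step: one must know that the exceptional classes $E_i$ are $\mathbb Z$-linearly independent and that their span truly complements $h^*\mathrm{Pic}(\overline X)$. The cleanest route is equivariant weak factorization, reducing $h$ to a sequence of blow-ups along smooth $W$-invariant centers, for which the formula is immediate and $W$-equivariance of the basis is manifest; in characteristic zero this is supplied by the equivariant AKMW theorem applied over $\bark$ with descent of the resulting $W$-action. Over a global field of positive characteristic one cannot invoke weak factorization directly, and a Colliot-Thélène–Sansuc type descent argument — computing $\mathrm{Pic}(\overline X)$ modulo stably permutation $W$-modules as an intrinsic $G$-birational invariant, without factoring birational morphisms — would be required; I expect the stronger Theorem~\ref{th:crit} is phrased precisely so as to package this descent cleanly.
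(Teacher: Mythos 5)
Your argument is fine in characteristic zero, and in fact it is exactly the ``shorter proof'' that the paper itself records in a remark after Corollary \ref{Corollary 5}: with equivariant resolution and equivariant weak factorization one reduces to blow-ups along smooth invariant centres, where the decomposition of the Picard group is immediate (cf.\ \cite[Proposition~2.5]{BP}). But Theorem B is stated over an arbitrary field, and the whole point of the paper's main application (Theorem A, for every global field of characteristic $\ne 2$, hence including function fields over finite fields) is that the criterion must be available in positive characteristic. There your third step has no justification: equivariant Hironaka resolution and the AKMW weak factorization theorem are characteristic-zero tools, and you acknowledge the gap yourself, only expressing the hope that ``a Colliot-Th\'el\`ene--Sansuc type descent argument would be required.'' That missing argument is precisely the content of the paper's proof, so as it stands your proposal proves a weaker statement than the one asserted. (A smaller quibble: even in characteristic zero, resolving over $\bark$ and then ``descending the $W$-action'' is not automatic; one should instead invoke functorial resolution over $k$, which is then equivariant for the constant group $G$.)

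The paper's route avoids resolution entirely. From the $G$-equivariant birational map one takes the closure $Z\subset X\times_k Y$ of the graph and its normalization $\tilde Z$; the $G$-action extends to $\tilde Z$ by an elementary integrality argument on $G$-stable affine charts (Lemma \ref{Lemma 1}, Proposition \ref{Proposition 3}), and the two projections are isomorphisms over open sets containing all codimension-one points of $X$ resp.\ $Y$. One then copies the Colliot-Th\'el\`ene--Sansuc argument of Moret-Bailly \cite[Prop.~2.A.1]{CTS2}, comparing each Picard group with the group of Weil divisor classes of the (normal, possibly singular) variety $\tilde Z$:
$$\Pic(\overline X)\oplus P_1\;\cong\;CH^1(\tilde Z_{\bark})\;\cong\;\Pic(\overline Y)\oplus P_2,$$
with $P_1,P_2$ permutation $W$-modules generated by the exceptional divisors, which $W$ permutes. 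Because only normality of $\tilde Z$ is needed, this works over any field and in any characteristic, which is what the application to Theorem A requires; your exceptional-divisor decomposition is replaced by the same idea carried out on $CH^1$ of a normal model rather than on $\Pic$ of a smooth one.
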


The second ingredient in the proof of Theorem A is an explicit construction
of a birational involution of a certain 3-dimensional algebraic $k$-torus $T$.
It is a modification of \cite[Example~3.7]{Ku2}. This purely algebraic result
allows us to prove that the geometric Picard module $\Pic (\overline X)$
of a smooth compactification $X$ of $T$ is {\it not}
stably permutation {for the $W$-action} and
thus deduce from Theorem B that the corresponding birational involution is {\it not}
$k$-linearizable.

To finish the proof of Theorem A and prove that this involution is everywhere locally
linearizable, we use (in the same way as in \cite{Ku2})
the existence of a biquadratic extension $K/k$ all decomposition groups of which are cyclic.
This is actually the only arithmetical ingredient of the proof.


\begin{Rk} \label{rem:stable}
Below we shall prove slightly more general versions of Theorems A and B. Namely, we shall use
the {\it stable} linearizability condition,
 see details in Section \ref{sec:proof}. Recall that there are several subtle
examples showing the existence of a gap between stable rationality and
rationality in the arithmetic case \cite{BCTSSD}, and  between
stable linearizability and linearizability in the geometric case
\cite{RY}, \cite{LPR}, \cite{HT}, \cite{BvBTH}, \cite{BvBT}, \cite{CTZ}, \cite{PSY}.
\end{Rk}

\section{Preliminaries on tori} \label{sec:prelim}
In this section, we provide a brief account of some basic facts on algebraic tori necessary
for the proof of Theorem A. Details can be found in \cite{CTS1}, \cite{Vo}.

$\bullet$ A $k$-torus $T$ is an algebraic $k$-group such that $T\times_kk^{\textrm{alg}}\cong \mathbb G_{\textrm{m},k^{\textrm{alg}}}^d$,
where $k^{\textrm{alg}}$ is an algebraic closure of $k$.

$\bullet$ Every torus $T$ splits over some finite separable extension of $k$ (Ono, Borel, Springer, Tate, Tits),
see \cite{Yu} for an overview of different proofs. Hence in the definition above one can replace $k^{\textrm{alg}}$
with $\bar k$. There is a unique smallest finite Galois subextension $L/k$ inside $\bar k/k$ which splits $T$.
We call $L$ the minimal splitting field of $T$.

$\bullet$
Let $L$ be the minimal splitting field of $T$, $\Pi=\Gal (L/k)$,
then $M=\hat T=\Hom (T,\mathbb G_{\mathrm m})$ viewed as a $\Pi$-module
is called the character module of $T$.

$\bullet$
Many rationality properties of $T$ can be expressed in terms of $M$.

We say that a $\Pi$-module $N$ is
\begin{enumerate}
\item [(i)] {\it permutation} if it has a $\mathbb Z$-base globally respected by $\Pi$;

\item
[(ii)] {\it stably permutation} if $N\oplus S\cong S'$ for some permutation modules $S$, $S'$;
\item
[(iii)] {\it invertible} if $N$ is a direct summand of a permutation module;
\item
[(iv)] $H^1$-{\it trivial} (aka coflasque) if $H^1(\Pi',N)=0$ for all subgroups $\Pi'\le \Pi$;
\item
[(v)] {\it flasque} if the dual module $N^{\circ}:=\Hom (N,\mathbb Z)$ is $H^1$-trivial.
\end{enumerate}
We have irreversible implications $$\mathrm{(i)\Rightarrow(ii)\Rightarrow(iii)\Rightarrow(iv)\cap (v)\Rightarrow(iv)({\textrm{ or}} (v))}.$$

$\bullet$ Any module $M$ can be embedded into a short exact sequence
$$
0\to M\to S\to F\to 0,
$$
where $S$ is permutation and $F$ is flasque; such a sequence is called a flasque resolution of $M$.

The module $F$ can be constructed geometrically: embed $T$ into a smooth projective variety $X$
as an open subset (this is possible even in positive characteristic \cite{CTHS}),
then $F=\Pic (X\times_k\bar k)$.


$\bullet$ We say that a  $k$-torus $T$ is
\begin{enumerate}
\item[(i)] $k$-{\it rational} if $T$ is birationally $k$-equivalent to $\mathbb A^d$;
\item[(ii)] {\it stably} $k$-{\it rational} if $T\times \mathbb A^m$ is $k$-rational for some $m\ge 0$;
\item[(iii)]  {\it retract rational}  if $T\times T'$ is $k$-rational for some torus $T'$.
\end{enumerate}

We then have the implications $\mathrm{(i)\Rightarrow(ii)\Rightarrow(iii)}$; the right one is irreversible
whereas the reversibility of the left one is a notoriously difficult long-standing problem.


\begin{Rk}
In the original definition \cite{Sa}, the property of retract rationality is different. For tori,
it implies the property of item (iii), as proved in \cite[Proposition~7.4]{CTS3}.
\end{Rk}

$\bullet$ We have the following relations:
$T$ is stably rational (resp. retract rational) if and only if the module $F$ in a flasque
resolution of $M=\hat T$ is stably permutation (resp. invertible).

$\bullet$ We shall use the notation $R_{K/k}$ for Weil's restriction of scalars from $K$ to $k$.

\section{Proofs of main results} \label{sec:proof}

We start with reformulations of Theorems A and B in a slightly more general way,
as mentioned in the introduction.  We maintain the notation introduced there.

\begin{Def} \label{def:stable}
We say that
smooth, projective, integral
$G$-varieties $X$, $Y$ defined over a field $k$ are {\emph{stably $G$-birational}} if there exist $m,n$ such that
$X\times \mathbb P^n_{k}$ and $Y\times \mathbb P^m_{k}$, where $G$ acts trivially on
the projective spaces, are $G$-birational.

We say that $X$ is {\emph{stably projectively $k$-linearizable}}
if it is stably $G$-birational to a projective space $\mathbb P^m_k$ with trivial $G$-action.

We say that $X$ is {\emph{stably $k$-linearizable}}
if it is stably $G$-birational to a projective space $\mathbb P(V)$ equipped with the action
induced by a linear action of $G$ on a vector $k$-space $V$.
\end{Def}

\begin{Rk}
Note that there are different definitions of (stable) linearizability, see \cite{Pr}, \cite{CTZ}.
\end{Rk}






The linearizability properties introduced in Definition \ref{def:stable} fit into
the following diagram of implications:

$$
\begin{array}
[c]{ccc}
k{\rm{-linearizable }} & \Rightarrow & {\rm{stably}} \, k{\rm{-linearizable }}\\
\Downarrow &  & \Downarrow  \\
{\rm{projectively}} \, k{\rm{-linearizable }} & \Rightarrow & {\rm{stably}} \, {\rm {projectively}} \, k{\rm{-linearizable }}
\end{array}
$$



\bigskip

Before proceeding to the precise statements and proofs, one has to ask a natural question
regarding the behaviour of the linearization properties with respect to taking direct products.

\begin{Qu} \label{q:prod}
Let $X$, $Y$ be smooth projective $k$-varieties equipped with a faithful action of a finite group $G$.

(i) Suppose that $X$ and $Y$ are $k$-linearizable. Is it true that so is $X\times_kY$?

(ii) Suppose that $X$ and $Y$ are projectively $k$-linearizable. Is it true that so is $X\times_kY$?

\end{Qu}

It turns out that the answers to (i) and (ii) are different. The following remarks are due to Yuri Tschinkel.

\begin{Rk} \label{rm:prodprojlin}
Question \ref{q:prod}(i) is answered in the affirmative. The proof is based on a projective version of
the no-name lemma (see \cite{Kr}), which can be easily deduced from \cite[Lemma~3.5]{CTS4}. 
\end{Rk}

\begin{lm} \label{lem:noname}
Let $X$ be a projective $G$-variety. Assume that the $G$-action is generically free.
Let $\mathcal E \to X$ be a $G$-vector bundle of positive rank $n$. Then we have a birational isomorphism
\begin{equation} \label{eq:noname}
\mathbb P(\mathcal E)\sim_GX\times \mathbb P^{n-1},
\end{equation}
where on the right-hand side the action of $G$ on the second factor is trivial.
\end{lm}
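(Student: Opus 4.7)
The plan is to adapt the standard proof of the affine no-name lemma to the projective setting, reducing everything to Galois descent at the generic point of $X$. Set $L=k(X)$ and $K=L^G$. Since the $G$-action on $X$ is generically free, $L/K$ is a finite Galois extension with Galois group $G$, and the generic fiber of $\mathcal E$ is an $n$-dimensional $L$-vector space $V$ carrying a semi-linear $G$-action with respect to the action of $G$ on $L$.

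Next, I would invoke Speiser's lemma, that is, the vanishing $H^1(G,\GL_n(L))=1$ for a Galois extension $L/K$ (a generalized Hilbert~90). This produces a $G$-equivariant $L$-linear isomorphism
$$V \;\cong\; L\otimes_K V^G,$$
where $V^G$ is an $n$-dimensional $K$-vector space on which $G$ acts trivially. Spreading this isomorphism out to a $G$-stable dense open subset of $X$ yields a $G$-equivariant birational isomorphism $\mathcal E\dashrightarrow X\times_k V^G\cong X\times_k\mathbb A^n$ with trivial $G$-action on the second factor; this is the classical affine no-name lemma as recorded in \cite{Kr}. Projectivizing $V$ fiberwise over $L$ now gives
$$\mathbb P(V)\;\cong\;\Spec(L)\times_K\mathbb P(V^G)$$
$G$-equivariantly, and spreading out delivers the desired $G$-birational isomorphism \eqref{eq:noname}.

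The only non-formal ingredient is the appeal to Speiser's lemma, which requires the extension $L/K$ to be Galois; this is precisely what the generic freeness of the $G$-action on $X$ guarantees. Without this hypothesis, $V^G$ would have too small a dimension and the fiberwise trivialization would fail. Everything else is routine: fiberwise projectivization and the passage from a statement at the generic point to a birational statement over $X$. I therefore expect the verification to be short, with the one care point being the cleanness of the spreading-out step, namely producing an explicit $G$-stable open subset of $X$ over which the generic-fiber trivialization is defined; since both $\mathcal E$ and the trivial model are coherent over $X$, this is a standard approximation argument.
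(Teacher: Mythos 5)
Your argument is correct, but there is no proof in the paper to compare it with: the paper states Lemma \ref{lem:noname} without proof, referring to \cite{Kr} (a paper in preparation), so your Galois-descent proof stands on its own. It is in fact the standard derivation of the no-name lemma: generic freeness (for a finite constant group on an integral variety this is just faithfulness) makes $L=k(X)$ Galois over $K=L^{G}$ with group $G$, the $G$-linearization of $\mathcal E$ gives a semilinear $G$-action on the generic fibre $V=\mathcal E_{\eta}$, and Speiser's lemma ($H^{1}(G,\GL_n(L))=1$) yields $V\cong L\otimes_K V^{G}$. The one point worth stressing, since it is exactly what makes the projective statement follow from the affine one, is that this trivialization is $L$-linear on the generic fibre (equivalently, linear on the fibres over a dense $G$-stable open set), so it commutes with the scaling action and descends to projectivizations, giving $\mathbb P(V)\cong \mathbb P^{n-1}_{k}\times_k\Spec L$ with $G$ acting only through $\Spec L$; a mere birational trivialization of $\mathcal E$ would not suffice. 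For the spreading-out step you need not produce an explicit $G$-stable open set: a $G$-equivariant isomorphism of function fields is exactly the criterion for $G$-birational equivalence recorded in Lemma \ref{Lemma 2}, so the generic-point statement already finishes the proof. Note also that projectivity of $X$ plays no role in your argument (integrality and faithfulness are all that is used), which is harmless since the application in Remark \ref{rm:prodprojlin} only concerns projective $X$.
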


With Lemma \ref{lem:noname} at hand, for $X=\mathbb P(V)$ and $Y=\mathbb P(V')$ we view $\mathbb P(V)\times \mathbb P(V')$
as the projectivization of a trivial $G$-vector bundle over $\mathbb P(V)$, 
and (\ref{eq:noname}) implies that
$\mathbb P(V)\times \mathbb P(V') \sim_G \mathbb P(V)\times \mathbb P^{\dim(V')-1}$, with trivial action on the second factor.
Then the same argument puts the trivial part into the representation. (This argument is implicit in \cite[Lemma~1]{Ka}.)

\begin{Rk} \label{rm:prodlin}
Question \ref{q:prod}(ii) is in general answered in the negative, even when $k$ is algebraically closed,  because projective linear actions have cyclic
Amitsur group ${\mathrm{Am}}^2(X,G)$, and a product of two projective linear actions with nontrivial Amitsur group will give a noncyclic group.
See \cite{KrT} for details on the Amitsur invariant.
\end{Rk}

We now go over to the precise statements of the main results.

Theorem B follows from the following stronger version.

\begin{Th} \label{th:crit} (Colliot-Th\'el\`ene)
Suppose that smooth, projective, geometrically integral
 $G$-varieties $X$, $X'$
 over the field $k$
are stably projectively $G$-birational. Then the $W$-modules $N=\Pic (\overline X)$
and $N'=\Pic (\overline X')$,
where $\overline X=X\times_k\bar k$ and $\bark$ is a separable closure of $k$,
are similar, in particular $H^1(U,N)=H^1(U,N')$ for any subgroup $U\le W$.

Thus if $X$ is stably projectively $k$-linearizable, then $N$ is a stably permutation $W$-module,  in particular,
it is flasque and coflasque.
 \end{Th}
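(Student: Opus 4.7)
The plan is to reduce everything to a single computation: blowing up a smooth projective $G$-variety along a smooth $G$-invariant closed subscheme changes the geometric Picard module only by a permutation $W$-summand. Concretely, if $\pi \colon \widetilde X \to X$ is such an equivariant blowup with smooth $G$-invariant center $Z$, then the classical direct sum decomposition
\[
\Pic(\overline{\widetilde X}) \;\cong\; \pi^{*}\Pic(\overline X) \;\oplus\; \bigoplus_{i} \mathbb Z[E_i],
\]
with the $E_i$ the irreducible components of the exceptional divisor over $\bark$, is $W$-equivariant: $\pi^{*}$ is $W$-equivariant because the blowup is defined over $k$, and $W$ permutes the $E_i$ in the same way it permutes the geometric components of $\overline Z$. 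Hence the right-hand summand is a permutation $W$-module and $\Pic(\overline{\widetilde X})$ is similar to $\Pic(\overline X)$.

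Granted this lemma, I would invoke $G$-equivariant weak factorization (in characteristic zero via the equivariant version of the Abramovich--Karu--Matsuki--W{\l}odarczyk theorem, with all data defined over $k$ so that $\Gamma$-equivariance is automatic by Galois descent and the full chain is $W$-equivariant) to factor any $G$-birational equivalence $X \sim_G X'$ into a sequence of smooth equivariant blowups and blowdowns. Iterating the lemma along this chain yields that $N = \Pic(\overline X)$ and $N' = \Pic(\overline{X'})$ are similar as $W$-modules. To bridge from $G$-birational to stably projectively $G$-birational, one uses that $\Pic(\overline{X\times_k \mathbb P^n_k}) = \Pic(\overline X)\oplus \mathbb Z$ with trivial $W$-action on the $\mathbb Z$ factor, which is again a permutation summand; thus the similarity class of the Picard module is stable under taking products with $\mathbb P^n_k$ (trivial action). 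In the specialization to stable projective $k$-linearizability we take $X' = \mathbb P^m_k$ with trivial $G$-action, so $\Pic(\overline{X'}) = \mathbb Z$ with trivial $W$-action is itself permutation, forcing $N$ to be stably permutation and hence both flasque and coflasque.

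The cohomological equality $H^1(U,N) = H^1(U,N')$ is formal once similarity is in hand, provided $H^1(U,S) = 0$ for every permutation $W$-module $S$ and every closed subgroup $U\le W$. This reduces via Shapiro's lemma to the vanishing $H^1(U_\alpha,\mathbb Z) = \Hom_{\mathrm{cts}}(U_\alpha,\mathbb Z) = 0$ for the profinite stabilizers $U_\alpha$ of the $U$-orbits on $W/H$, which is immediate. I expect the main obstacle to be guaranteeing $G$-equivariant weak factorization in the generality needed, particularly over imperfect fields of positive characteristic where equivariant resolution is not immediately at hand; one workaround is to avoid weak factorization entirely and argue directly at the level of Picard modules via the flasque resolution machinery of Colliot-Th\'el\`ene--Sansuc applied to smooth compactifications of a common equivariant model, exploiting that $\overline X$ sits over a perfect closure where classical resolution applies.
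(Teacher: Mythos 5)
Your main route (equivariant blowup changes $\Pic$ of the geometric model only by a permutation $W$-summand, then equivariant weak factorization, then the trivial $\mathbb Z$-summand for the $\mathbb P^n$-factors) is sound as far as it goes, but it only goes as far as characteristic zero: weak factorization, and even the existence of the smooth equivariant models it manipulates, rests on resolution of singularities. The paper explicitly relegates exactly this argument to a remark (``if $\Char k=0$ and resolution of singularities is available, there is a shorter proof\dots cf.\ \cite[Proposition~2.5]{BP}''), precisely because Theorem \ref{th:crit} is asserted for an arbitrary field and is then applied to global fields of positive characteristic in Theorem \ref{th:main}. So for the statement as given there is a genuine gap, and your proposed workaround does not close it: passing to a perfect closure does not help, since resolution of singularities is \emph{not} known over perfect (or algebraically closed) fields of positive characteristic beyond dimension three, and ``flasque resolution machinery applied to smooth compactifications of a common equivariant model'' presupposes smooth equivariant compactifications whose existence is exactly what is unavailable.

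The paper's actual proof needs neither resolution nor factorization. Given a $G$-birational map between the smooth projective $G$-varieties, it takes the closure of its graph, equips it with the induced $G$-action, and passes to the normalization $\tilde Z$ (Lemma \ref{Lemma 1} and Proposition \ref{Proposition 3} ensure the $G$-action lifts and the projections are $G$-morphisms isomorphisms over opens containing all codimension-one points). Then, following Moret-Bailly's argument in \cite[Prop.~2.A.1]{CTS2}, one compares each of $\Pic(X)$ and $\Pic(Y)$ with $CH^1(\tilde Z)$ of the merely \emph{normal} common model: each differs from $CH^1(\tilde Z)$ by a permutation $W$-module generated by the divisorial components contracted by the respective projection. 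This yields $\Pic(X)\oplus P_1\simeq\Pic(Y)\oplus P_2$ over any field, and the stable/projective-space bookkeeping and the $H^1$ consequence are then as in your last two paragraphs (which are fine). If you want a complete proof in the stated generality, you should replace the weak-factorization step by this graph-normalization argument; as written, your proof establishes the theorem only in characteristic zero.
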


\begin{Rk} \label{rem:equi}
The earlier versions of Theorem \ref{th:crit} were established in two extreme situations: the arithmetic case, when
$k\ne\bark$ and $G=\{1\}$, was settled by  Manin \cite{Ma} for  surfaces and by
Voskresenskii \cite[Chapter~2, Section 4.4]{Vo} in general, under resolution of singularities,
and the geometric case,
when $k = \bark$ and $G\ne\{1\}$, is due to Bogomolov and Prokhorov \cite{BP}
{under resolution of singularities}.
\end{Rk}




We postpone the proof of Theorem \ref{th:crit} until Section \ref{sec:crit}.

\medskip

Before proving Theorem A, we also make its statement a little more general,
as for Theorem B.

First note that for $m>n$ we have a natural inclusion of Cremona groups
$\Cr_n(k)\subset\Cr_m(k)$ induced by adjoining variables. We say that
subgroups $G,G'\subset\Cr_n(k)$ are {\emph{stably conjugate}} if they
are conjugate in $\Cr_m(k)$ for some $m\ge n$. We say that $G\subset\Cr_n(k)$
is {\emph{stably (projectively) $k$-linearizable}} if it is stably conjugate to some
(projectively) $k$-linearizable subgroup.

\begin{Rk} \label{rem:reg}
Let $k$
 be an arbitrary field, $G$ a finite group,
$X$ a
smooth, projective, geometrically integral,
{\emph{$k$-rational}} $G$-variety of dimension $n$.
Assume that the action of $G$ on $X$ is faithful.
Then, as in the Introduction,
using a $k$-birational map $\varphi\colon X\dashrightarrow \mathbb P^n_{k}$,
one can embed $G$ into $\Cr_n(k)$ by mapping $g\in G$ to $\varphi\circ g\circ \varphi^{-1}$.

With this correspondence, $G\subset \Cr_n(k)$
is (stably) (projectively) $k$-linearizable if and only if
$X$ is (stably) (projectively) $G$-linearizable.
\end{Rk}

Theorem A follows from the following stronger version.

\begin{Th}  \label{th:main}
For any $n\ge 3$ and any global field $k$
of characteristic different from 2
there exists a birational involution
of $\mathbb P^n_{k}$ $($= an element $g$ of order $2$ in $\Cr_n (k))$ such that

$\bullet$ $g$ is $k_v$-linearizable for all $v$;

$\bullet$ $g$ is not stably projectively $k$-linearizable.
\end{Th}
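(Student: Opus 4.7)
Following the outline in the introduction, I would reduce to $n=3$ and then deduce all $n\ge 3$ by embedding the resulting $g\in\Cr_3(k)$ into $\Cr_n(k)$ through the natural inclusion: stable projective $k$-linearizability is equivalent to stable conjugacy in $\Cr_m(k)$ for large $m$ and so does not depend on whether one views $g$ in $\Cr_3$ or $\Cr_n$, and $k_v$-linearizability is evidently preserved under adding trivial factors.

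At $n=3$, I would take a modification of \cite[Example~3.7]{Ku2}. Fix a biquadratic extension $K/k$ (available since $\Char(k)\ne 2$ and $k$ is global), set $\Pi=\Gal(K/k)\cong(\mathbb Z/2\mathbb Z)^2$, and construct a $3$-dimensional $k$-rational $k$-torus $T$ split by $K$ together with an algebraic involution $\sigma\colon T\to T$. The character module $M=\hat T$ then becomes a $W$-module for $W=G\times\Pi$, $G=\langle\sigma\rangle$. Using \cite{CTHS}, choose a $W$-equivariant smooth projective compactification $X$ of $T$; then $N=\Pic(\overline X)$ fits into a $W$-equivariant flasque resolution
\[
0\longrightarrow M\longrightarrow S\longrightarrow N\longrightarrow 0,
\]
and is explicitly computable from $M$. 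A $k$-birational trivialization $\psi\colon X\dashrightarrow\mathbb P^3_k$ turns $\sigma$ into an order-two element $g\in\Cr_3(k)$ via Remark~\ref{rem:reg}.

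For global non-linearizability, I would design $M$ so that some subgroup $U\le W$---typically a cyclic subgroup obtained by diagonally combining $\sigma$ with an element of $\Pi$---satisfies $H^1(U,N)\ne 0$. By Theorem~\ref{th:crit}, $N$ is then not stably permutation as a $W$-module, so $X$ and hence $g$ are not stably projectively $k$-linearizable. For local linearizability, I would invoke the arithmetic ingredient of \cite{Ku2}: the biquadratic extension $K/k$ can be chosen so that every decomposition group $\Pi_v\le\Pi$ is cyclic. For each such $v$, the restricted $(G\times\Pi_v)$-module $N|_{G\times\Pi_v}$ becomes stably permutation by direct inspection, and a no-name-type argument (in the spirit of Lemma~\ref{lem:noname}) combined with the classification of $3$-dimensional tori with cyclic splitting group promotes this to an actual $G$-equivariant $k_v$-birational isomorphism $X\times_k k_v\dashrightarrow\mathbb P(V_v)$ with $V_v$ a $k_v$-linear representation of $G$.

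The main obstacle will be the simultaneous design of the $\Pi$- and $\sigma$-actions on $M$: the resulting $N$ must fail being stably permutation as $W$-module globally (so that some $H^1(U,N)\ne 0$ for a $U$ genuinely mixing the $G$- and $\Pi$-directions), while for every cyclic $\Pi_v\le\Pi$ the restriction to $G\times\Pi_v$ must not only kill this obstruction cohomologically but admit an honest equivariant linearization over $k_v$. The only arithmetic input is the existence of a biquadratic $K/k$ with all decomposition groups cyclic, the same input as in \cite{Ku2}, and it is the only place the global-field hypothesis enters.
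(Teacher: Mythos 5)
Your overall scaffolding (the torus $T=R_{L/k}(\mathbb G_{\mathrm m,L})/\mathbb G_{\mathrm m,k}$-type construction from \cite{Ku2}, the inversion involution, a $W$-equivariant compactification via \cite{CTHS}, Remark \ref{rem:reg} to land in $\Cr_3(k)$, the biquadratic $L/k$ with cyclic decomposition groups, and the reduction from $n\ge 3$ to $n=3$) matches the paper. But the core of your plan for the \emph{global} obstruction --- ``design $M$ so that some subgroup $U\le W$, typically a cyclic subgroup obtained by diagonally combining $\sigma$ with an element of $\Pi$, satisfies $H^1(U,N)\ne 0$'' --- cannot work, and this is a genuine gap. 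Any such cyclic $U=\langle(\sigma,\pi)\rangle$ is contained in $G\times\langle\pi\rangle$, and by Chebotarev there are (infinitely many) places $v$ with $\Pi_v=\langle\pi\rangle$, no matter how $L$ is chosen; applying Theorem \ref{th:crit} over $k_v$, the required $k_v$-linearizability forces $H^1(U',N)=0$ for every $U'\le G\times\Pi_v$, in particular for your $U$. Likewise $H^1$ must vanish on all subgroups of $\{1\}\times\Pi$, since $X$ has to be $k$-rational for $g$ to define an element of $\Cr_3(k)$ at all. So a first-cohomology obstruction on mixed cyclic subgroups is self-defeating, and you give no construction producing nonvanishing $H^1$ only on the few remaining subgroups (those surjecting onto $\Pi$ and not equal to $\{1\}\times\Pi$). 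In the paper's example the situation is exactly the opposite: $N=\Pic(\overline X)$ is both flasque and coflasque, so \emph{all} groups $H^1(U,N)$ and $H^1(U,N^{\circ})$ vanish, and the failure of $N$ to be stably permutation is detected by the much finer classification of three-dimensional tori in \cite{Ku1} (the group generated by the $\Pi$-action together with $-I_3$ is the group $W_2$ there, whose flasque module is not even invertible). This classification input is precisely what your plan is missing, and without it Theorem \ref{th:crit} gives you nothing.

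A secondary, smaller gap is the local step. Knowing that $N$ restricted to $G\times\Pi_v$ is stably permutation does not by itself yield $k_v$-linearizability: Theorem \ref{th:crit} is only a necessary condition, and a ``no-name-type argument'' needs an actual $G$-vector bundle, which is not handed to you by the cohomology. The paper instead argues geometrically: since $\Pi_v$ is cyclic of order at most $2$, $T_v$ decomposes as a product of tori of dimension $1$ and $2$ (\cite[Chapter~2, \S5, Theorem~4]{Vo}), products of linearizable actions are linearizable by the projective no-name lemma, and each factor is linearized explicitly --- the split $\mathbb G_{\mathrm m}$ and the norm-one torus via a conic with a $G$-fixed rational point, and the factor $R_{L/k_v}\mathbb G_{\mathrm m,L}$ via the quadric $R_{L/k_v}\mathbb P^1_L$ and Lemma \ref{lem:Cheltsov} (projection from a $G$-fixed point plus Maschke, with a separate Jordan-block argument in characteristic $3$). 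Your sketch would need to be replaced by, or fleshed out into, an argument of this kind.
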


\begin{proof}
\noindent{\emph{Step 1.}} We start with the case $n=3$.

Let $L=k(\sqrt{a}, \sqrt{b})$ be a biquadratic extension. Embed $\mathbb G_{\mathrm m,k}$
into $R_{L/k}(\mathbb G_{\mathrm{m},L})$
and consider the quotient
$T=R_{L/k}(\mathbb G_{\mathrm{m},L})/\mathbb G_{\mathrm m,k}$.
The torus $T$ is $k$-rational, being an open subset of
$$R_{L/k}(\mathbb A^1_{L})/\mathbb G_{\mathrm m,k}=\mathbb A^4_{k}/\mathbb G_{\mathrm m,k}=\mathbb P^3_{k}.$$
The character module $\hat T$ is isomorphic to $\ker\left[\mathbb Z[\Pi]\to\mathbb Z\right]$, the augmentation ideal
of $\mathbb Z[\Pi]$, see \cite[Section~4.8]{Vo}.

It is convenient to use a matrix representation of $\hat T$. Fix a pair $\rho, \sigma$ of generators of $\Pi$, a basis of $\mathbb Z^3$,
and an isomorphism
of modules $\hat T$ and $\mathbb Z^3$. Consider the
subgroup of $\GL(3,\mathbb Z)$ corresponding to the $\Pi$-action on $\hat T$. It is conjugate
to
\begin{equation} \label{eq:I}
\left<
         \left(\begin{matrix} 0 & 0 & 1 \\ -1 & -1 & -1 \\ 1 & 0  & 0\end{matrix} \right),
 \left(\begin{matrix} -1 & -1 & -1 \\ 0 & 0 & 1 \\ 0 & 1 &  0 \end{matrix} \right)
\right>.
\end{equation}


\noindent{\emph{Step 2.}} Define a birational involution $g$ of $T$ sending $t$ to $t^{-1}$.
More formally, we define the action of $g$ on $\hat T$ (written additively)
as sign changing, so that the corresponding matrix is $-I_3$. Denote $G=\left<g\right>$.

Embed $T$ into a smooth projective model $X$ as an open subset.
Recall (see Section \ref{sec:prelim}) that such an $X$ exists  even when $\Char (k) > 0$.
Moreover, note that such an embedding can be made {\emph {$G$-equivariant}}. Indeed,
in the set-up of Theorem \ref{th:crit}, replace $\Gamma=\Gal(\bar k/k)$ with
its finite quotient $\Pi=\Gal (L/k)$ and  define $W=G\times\Pi$. Since the
$G$-action on the algebraic variety $T$ comes from its action by automorphisms of the lattice $\hat T$,
we can use \cite[Th.~1]{CTHS} to obtain a $W$-equivariant, smooth, projective fan from which we
obtain the needed $G$-variety $X$.

\medskip

\noindent{\emph{Step 3.}} Consider now the $W$-module $\Pic(\overline X)$.

Looking at the finite subgroup of $\GL(3,\mathbb Z)$ corresponding to the $W$-action on $\hat T$, we see from \eqref{eq:I}
and the definition of the action of $g$ that it is conjugate to
 \begin{equation} \label{eq:II}
\left<\left(\begin{matrix} 0 & 0 & 1 \\ -1 & -1 & -1 \\ 1 & 0  & 0\end{matrix} \right),
      \left(\begin{matrix} -1 & -1 & -1 \\ 0 & 0 & 1 \\ 0 & 1 &  0 \end{matrix} \right) ,
         \left(\begin{matrix} -1 & 0 & 0 \\ 0 & -1 & 0 \\ 0 & 0  & -1\end{matrix} \right)
\right>.
\end{equation}

Looking at the list of finite subgroups of $\GL(3,\mathbb Z)$ corresponding to
 tori that are not stably rational \cite[Theorem~1]{Ku1}, we see that subgroup \eqref{eq:II} appears
on that list where it is denoted by $W_2$. According to \cite[Section~4]{Ku1},
the flasque $W$-module $F=\Pic (\overline X)$
is not invertible (and hence not stably permutation).

See \cite[Section~7]{HY} for a computer verification of this result, see \cite[Section~5]{CTZ}
for an explicit geometric description of the toric variety $X$.

Note that the $W$-module $\Pic (\overline X)$ is flasque and coflasque,
see \cite[Section~3]{Ku2}, \cite{HY};
this allowed the use of the torus $T'$ in \cite[Example~3.7]{Ku2} for constructing an example of
a toric variety violating a local-to-global principle for rationality in the absence of the Brauer obstruction.

According to Theorem \ref{th:crit}, $G$ is not stably projectively $k$-linearizable.

\medskip

\noindent{\emph{Step 4.}}
Let us now choose $L$ 
such that all decomposition groups $\Pi_v$ are cyclic
(hence of order 1 or 2). Such a choice is always possible, see \cite[Corollary~9.2.3]{NSW}.
Thus each $k_v$-torus $T_v=T\times_{k}k_v$ splits by a quadratic extension $K:=k_v(\sqrt{d})$.
Then $T_v$ is a direct product of tori of dimension 1 or 2 \cite[Chapter~2, \S5, Theorem~4]{Vo}. 
By Remark \ref{rm:prodprojlin}, it is enough to show that the action of $G$ on the 1- and 2-dimensional direct factors of $T_v$
is linearizable.

(Note that even in the case $n=2$, the linearizability of a birational involution is not automatic, even if it is linearizable
after a quadratic base change, see, e.g. \cite[Section~8]{CMYZ}.)

For the one-dimensional split torus $\mathbb G_{\rm{m},k_v}$ the needed property is immediate: it equivariantly embeds into $\mathbb P^1$ where $g$ acts by swapping the coordinates,
and the projective representation $G\to\PGL_2(k_v)$ corresponding to this action lifts to a linear representation.

Let now $T_v$ be the norm one torus $R_{K/k_v}^1{\mathbb G}_{\textrm{m,K}}$ with the $G$-action given by $g(t)=t^{-1}$. Let $\gamma$ denote
the generator of $\Gal (K/k_v)$. Then $T_v$ can
be represented by the equation $N(x)=x\bar x=1$, where $\bar x=\gamma (x)$, and hence the action of $g$ coincides with the action of $\gamma$.

One can view $T_v$ as a smooth affine conic $C\mathbf \subset  \mathbb A^2$ given by $x^2-dy^2=1$, it has a $G$-invariant rational point $P=(1,0)$.
The smooth equivariant compactification $\overline C \subset \mathbf P^2$ given by $X^2-dY^2=Z^2$ has a $G$-invariant rational point $\overline P=(1:0:1)$.
We have an isomorphism $f\colon \overline C \overset{\cong}{\to}\mathbf P^1$, with the $G$-action induced by the $G$-action on $\overline C$ and
$G$-invariant rational point $f(\overline P)$. This regular action gives rise to a projective representation $G\to \PGL(2,k_v)$. The presence of a
$G$-invariant rational point implies that the image of $g$ is conjugate to $\left(\begin{matrix} 1 & 0 \\ 0 & -1\end{matrix} \right)$ (modulo scalar
matrices). Hence this representation can be lifted to a linear representation $G\to \GL(2,k_v)$.
\begin{Rk}
There are projectively linearizable actions of the group of order 2 that are not
linearizable (even one-dimensional). I thank Yuval Ginosar for indicating the following example.
Let $k$ be a field of characteristic different from 2 that is
not quadratically closed, let $a\in k^*\setminus k^{*2}$,  and
let $G=\mathbb Z/2\mathbb Z=\left<g\right>$ act on $\mathbb G_{\textrm{m},k}$ by $g(t)=a/t$. This action
gives rise to the projective representation $\varphi\colon G\to \PGL_2(k)$ with $\varphi(g)$ conjugate to
$\left(\begin{matrix} 0 & a \\ 1 & 0\end{matrix} \right)$ (modulo scalar matrices), which cannot be lifted
to a linear representation. (Such a phenomenon cannot occur in the traditional setting where
$k$ is algebraically closed of characteristic zero because then the Schur multiplier $H^2(G,k^*)$ is zero for
any cyclic group $G$.)
\end{Rk}

To finish the proof of the case $n=3$ of the theorem, we have to consider two-dimensional tori appearing in a direct decomposition of $T_v$.
Such a torus must be isomorphic either to a product of two one-dimensional tori (when Remark \ref{rm:prodprojlin} is applicable),
or to a torus of the form $S=R_{L/k_v}\mathbb G_{\textrm{m},L}$,
where $L/k_v$ is a quadratic field extension.

We first embed $S$ into a $k_v$-quadric $Q=R_{L/k_v}\mathbb P^1_L$ as an open subset, with the $G$-action on $Q$
induced by the $G$-action on $\mathbb G_{\textrm{m},k_v}$ (recall that Weil's restriction functor is compatible with
group actions, see \cite[Section~5]{Bri2}). More explicitly, the open set $x^2-dy^2 \neq 0$ in $\mathbb A^2$
can be identified with the open set $zt\neq 0$ in the quadric $x^2-dy^2+zt=0$.

To show that the $G$-action on $Q$ is $k_v$-linearizable, we use the following
lemma. Its proof (for the case $k=\mathbb C)$ and the subsequent remark were communicated to me by Ivan Cheltsov.
The case of general $k$ requires certain adjustments (suggested by Jean-Louis Colliot-Th\'el\`ene). 

\begin{lm} \label{lem:Cheltsov}
Let $G$ be a finite group, let $n\ge 3$, and let $Q\subset \mathbb P_k^n$ be a quadric hypersurface equipped with
a faithful action of $G$. Assume that the order of $G$ is prime to $n$. 
Suppose that $Q$ has a $k$-point fixed by $G$. Then the action of $G$ is $k$-linearizable.
\end{lm}

\begin{proof}
Projecting from the $G$-fixed $k$-point, we get a $k$-birational map $Q\dashrightarrow \mathbb P_k^{n-1}$.
This gives rise to a projective $k$-representation $G\to\PGL_n$. 
Consider the degree $n$ covering $\SL_n\to \PGL_n$. 
The long sequence of cohomology yields an exact sequence of groups 
\begin{equation} \label{covering} 
1 \to \mu_{n}(k) \to \SL_{n}(k) \to \PGL_{n}(k) \to k^*/k^{*n}
\end{equation}
where $\mu_n$ is the $k$-group scheme of $n^{\textrm{th}}$ roots of unity. Sequence \eqref{covering} 
induces short exact sequences 
$$
1 \to \mu_{n}(k) \to \SL_{n}(k) \to \SL_{n}(k)/\mu_{n}(k) \to 1
$$
and 
$$
1 \to \SL_{n}(k)/\mu_{n}(k)  \to \PGL_{n}(k) \to  k^*/k^{*n}. 
$$
Since we have an embedding $G \hookrightarrow \PGL_{n}(k)$ and 
the order of $G$ is prime to $n$, we conclude that $G$ embeds into 
$\SL_{n}(k)/\mu_{n}(k)$. By pull-back, we obtain a group extension 
\begin{equation} \label{extension}
1 \to \mu_{n}(k) \to G' \to G  \to 1
\end{equation}
where $G'$  is a subgroup of $\SL_{n}(k)$ and 
$\mu_{n}(k)$ is a central abelian
subgroup of $G'$ of order prime to the order of $G$.
Such extensions are classified by the group 
$H^2(G,\mu_{n}(k))$, which is killed by the order of $G$ and 
the order of $\mu_{n}(k)$ and is therefore zero. 

Hence extension \eqref{extension} admits a section, a group homomorphism $G \to G'$, 
whence a required embedding $G\hookrightarrow \SL_{n}(k)$.
\end{proof}

\begin{Rk}
The statement of Lemma \ref{lem:Cheltsov} (for $k=\mathbb C$) was generalized by Alexander Duncan, see \cite[Lemma~2.4]{ACKM}.
\end{Rk}

Thus the case $n=3$ is settled.

\medskip

\noindent{\emph{Step 5.}} To treat the case of $G\subset \Cr_{l}(k)$, $l>3$, one can take the involution acting on the first three
variables as $g$ considered above and trivially on all the remaining variables. The resulting involution on
$T\times\mathbb G_{\textrm{m}}^{l-3}$ extends to $X_l:=X\times \mathbb P^{l-3}$ with the three-dimensional $X$
as above and the trivial action of $G$ on the projective space. Therefore the $W$-module $\Pic (\overline X_l)$
is isomorphic to $\Pic (\overline X) \oplus \mathbb Z^{l-3}$ and is thus not stably permutation, as $\Pic (\overline X)$,
which proves that $G$ is not stably projectively $k$-linearizable. The local linearizability is proven in exactly the same way as
in the case $n=3$.

This finishes the proof of Theorem \ref{th:main}.
\end{proof}




\section{Proof of Theorem \ref{th:crit}} \label{sec:crit}
As said in the introduction, the proof presented below was communicated to me by
Jean-Louis Colliot-Th\'el\`ene.

Let $G$ be a finite group. Let $k$ be a field (no restriction on the field).
In what follows, a $G$-action on a $k$-variety $X$ is always  assumed to be
an action over $k$.


We start with a couple of lemmas.

\begin{lm} \label{Lemma 1}
Let $G$ be a finite group. Let $X/k$ be a quasi-projective variety
equipped with a $G$-action. Then $X$ can be covered by affine open sets which are
$G$-stable.
\end{lm}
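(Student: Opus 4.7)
The plan is to show that every point $x \in X$ admits a $G$-stable affine open neighborhood in $X$, and then cover $X$ by such neighborhoods. Since $G$ is finite, the orbit $Gx$ is a finite set of (scheme-theoretic) points of $X$. The first step I would carry out is the classical observation that any finite set of points of a quasi-projective $k$-variety is contained in some affine open subset. Concretely, embed $X$ as a locally closed subscheme of a projective space $\mathbb P^n_k$, write $X = \overline X \smallsetminus Z$ with $\overline X$ projective and $Z$ closed in $\overline X$, and then, using that for large enough $d$ the linear system of degree-$d$ hypersurfaces containing $Z$ is base-point free away from $Z$, produce a hypersurface $H \subset \mathbb P^n_k$ that contains $Z$ but avoids every point of the finite set $Gx$. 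The complement $U := \overline X \smallsetminus H$ is an affine open of $\overline X$ contained in $X$ and containing $Gx$.

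Given such a $U \supseteq Gx$, I would then form
\[
V \;:=\; \bigcap_{g\in G} g(U).
\]
Each translate $g(U)$ is an affine open of $X$ since $g$ acts by a $k$-automorphism of $X$, and each $g(U)$ still contains $Gx$; in particular $V$ is an open neighborhood of $x$, and it is $G$-stable by construction. Because $X$ is quasi-projective, it is separated over $k$, so the diagonal $X \to X \times_k X$ is a closed immersion. A standard consequence is that the intersection of any two affine opens of $X$ is again affine, and by induction on $|G|$ the finite intersection $V$ is affine. Letting $x$ range over $X$ produces the desired cover by $G$-stable affine opens.

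The step I expect to be the main obstacle — and the only non-formal point — is the first one: extracting a single affine open of $X$ that contains the entire orbit $Gx$. For affine or projective $X$ this is essentially free, but for a general quasi-projective $X$ one really must invoke the projective embedding and the hypersurface construction above (equivalently, one can phrase it as the statement that any finite subset of a quasi-projective variety admits a common affine open neighborhood, which follows from the very ampleness of $\mathcal{O}(d)$ twisted by the ideal sheaf of $Z$ for $d$ large). Once this is in hand, the \emph{$G$-stability} is obtained for free by the orbit-intersection trick, and \emph{affineness} of the intersection is purely a consequence of separatedness.
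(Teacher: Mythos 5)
Your proof is correct and follows essentially the same route as the paper: place the finite orbit $Gx$ inside one affine open (using quasi-projectivity), intersect the $G$-translates to get a $G$-stable neighborhood, and use separatedness to see that this finite intersection of affine opens is affine. The only difference is that you spell out the projective-embedding/hypersurface argument behind the classical fact that a finite set of points of a quasi-projective variety lies in a common affine open, which the paper simply invokes.
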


\begin{proof} Let $m \in X$ be a closed point. Since $X$ is quasiprojective, the finitely many points $g.m$
are contained in an affine open set $U$.
The intersection of the affine open sets $gU$ is
a $G$-stable open set which is affine (see \cite[Lemma~26.21.7]{St}), and contains $m$.
\end{proof}



The proof of the following lemma is omitted.

\begin{lm} \label{Lemma 2}  
Let $X$ and $Y$ be integral varieties over a field $k$.
Let $G$ be a finite group. There
 is an equivalence between:

(i) There is an action of $G$ on the fields $k(X)$ and $k(Y)$,
trivial on $k$,  and there exists an isomorphism of fields $k(X) \simeq k(Y)$
which respects the $G$-action.

(ii) There exist a $G$-stable non-empty open set $U \subset X$
and a  $k$-birational $G$-morphism  $U \to Y$ of $k$-varieties.

(iii) There exist non-empty open sets $U \subset X$ and $V \subset Y$
each with a $G$-action and a $G$-isomorphism $U \simeq V$
of $k$-varieties.
\qed
\end{lm}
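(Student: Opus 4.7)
The plan is to establish the cyclic chain $(iii) \Rightarrow (ii) \Rightarrow (i) \Rightarrow (iii)$, of which only the last implication carries any real content. The backbone of the argument is the classical bijection between finitely generated field extensions of $k$ (up to $k$-isomorphism) and integral $k$-varieties (up to $k$-birational equivalence), upgraded to track $G$-actions: an action of a finite group $G$ by $k$-automorphisms of an integral $k$-variety $X$ corresponds bijectively, via pullback, to an action of $G$ by $k$-automorphisms of $k(X)$.

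The two routine implications I would handle first. For $(iii) \Rightarrow (ii)$, compose the given $G$-isomorphism $U \xrightarrow{\sim} V$ with the open immersion $V \hookrightarrow Y$; the result is a $G$-equivariant $k$-birational morphism defined on the $G$-stable open $U \subset X$. For $(ii) \Rightarrow (i)$, pull back rational functions along $\varphi : U \to Y$ to obtain a $k$-isomorphism $\varphi^{*} : k(Y) \xrightarrow{\sim} k(U) = k(X)$; the identities $(\varphi \circ g_{X})^{*} = g_{X}^{*} \circ \varphi^{*}$ and $(g_{Y} \circ \varphi)^{*} = \varphi^{*} \circ g_{Y}^{*}$ translate $G$-equivariance of $\varphi$ directly into $G$-equivariance of $\varphi^{*}$.

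The substantial step is $(i) \Rightarrow (iii)$. Given a $G$-equivariant $k$-isomorphism $\psi : k(Y) \xrightarrow{\sim} k(X)$, the classical correspondence produces a $k$-birational map $\varphi : X \dashrightarrow Y$ with $\varphi^{*} = \psi$; the $G$-equivariance of $\psi$ means precisely that $\varphi = g_{Y}^{-1} \circ \varphi \circ g_{X}$ as rational maps for every $g \in G$. Let $U_{0} \subset X$ be the maximal domain of definition of $\varphi$. Since $g_{Y}^{-1} \circ \varphi \circ g_{X}$ is already a morphism on $g_{X}^{-1}(U_{0})$, maximality forces $g_{X}^{-1}(U_{0}) \subset U_{0}$, whence equality by finiteness of $G$; thus $U_{0}$ is $G$-stable. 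I would then shrink $U_{0}$ to a dense open $U_{1}$ on which $\varphi$ restricts to an open immersion (available because $\varphi$ is birational) and replace $U_{1}$ by $U := \bigcap_{g \in G} g_{X}(U_{1})$; this intersection is still open and non-empty (since $X$ is integral and $G$ is finite), $G$-stable by construction, and $\varphi|_{U}$ is still an open immersion as the restriction of one. The image $V := \varphi(U) \subset Y$ is open and $G$-stable by $G$-equivariance of $\varphi|_{U}$, and $\varphi : U \xrightarrow{\sim} V$ is the required $G$-isomorphism.

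The only delicate point I anticipate is the bookkeeping of $G$-stability: one must simultaneously shrink the open to make $\varphi$ an open immersion and re-enlarge it to restore $G$-stability, without collapsing to the empty set. This is harmless because integrality of $X$ forces any finite intersection of dense opens to be non-empty, and the other \emph{ad hoc} choices respect $G$-equivariance automatically. Note that no quasi-projectivity hypothesis enters, in contrast to the preceding lemma.
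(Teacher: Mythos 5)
The paper itself omits the proof of this lemma, so there is nothing to compare against line by line; judging your argument on its own terms, the directions (iii)$\Rightarrow$(ii)$\Rightarrow$(i) are fine (modulo the usual left/right convention when passing to $\varphi^*$), but your proof of (i)$\Rightarrow$(iii) has a genuine gap. In condition (i) the group $G$ acts only on the function fields, so each $g\in G$ gives merely a birational self-map $g_X\colon X\dashrightarrow X$ (and $g_Y\colon Y\dashrightarrow Y$) with its own locus of indeterminacy. Your argument, however, manipulates these as if they were regular automorphisms: the claim that $g_Y^{-1}\circ\varphi\circ g_X$ ``is already a morphism on $g_X^{-1}(U_0)$'' is false in general, since $g_Y^{-1}$ need not be defined at $\varphi(g_X(x))$; consequently the maximality argument does not yield $g_X^{-1}(U_0)\subset U_0$, and indeed ``$U_0$ is $G$-stable'' has no clear meaning before the action has been made regular somewhere. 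Likewise $U:=\bigcap_{g\in G}g_X(U_1)$ need not be open (images of opens under birational maps are only constructible), and its alleged $G$-stability uses the cocycle identity $h_X\circ g_X=(hg)_X$, which at this stage holds only as an equality of rational maps on an unspecified dense open.

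What is missing is precisely the substantive content of (i)$\Rightarrow$(iii): the regularization of a birational action of a finite group on a dense open set. The standard fix is to set $U_1=\bigcap_{g\in G}\mathrm{dom}(g_X)$ and then $U=\{x\in U_1: g_X(x)\in U_1 \ \text{for all } g\in G\}$, and to check, using separatedness and reducedness (two morphisms to a separated scheme agreeing on a dense open of an integral source agree where both are defined), that the identities $(hg)_X=h_X\circ g_X$ and $(g^{-1})_X\circ g_X=\mathrm{id}$ hold as morphisms on $U$; this makes each $g_X$ an automorphism of the dense open $U$, and similarly on the $Y$-side. Only after this regularization does your intended argument work: take the maximal open of $U$ on which the (now genuinely $G$-equivariant) birational map $\varphi$ is an isomorphism onto an open subset; equivariance plus maximality shows this open and its image are stable under the regularized actions, giving (iii) (one may also simply transport the action on $U$ to $V=\varphi(U)$). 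Your ``shrink, then intersect over $G$'' is the right instinct, but as written it skips the step that carries the content of the lemma, and several of its intermediate assertions fail for merely birational $g_X$, $g_Y$.
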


 If these properties hold, one says that there is a $G$-birational equivalence between $X$
 and $Y$.


\begin{prop} \label{Proposition 3}
Let $k$ be a field. Let $G$ be an abstract finite group.
Let $X$ and $Y$  be   quasi-projective  geometrically integral  $k$-varieties equipped with a $G$-action.
Suppose there exist
a  non-empty  $G$-stable open set $U \subset X$ and a $G$-morphism of
$k$-varieties $\phi\colon U \to Y$.  Then there exist a normal $k$-variety $ \tilde{X}$
with a $G$-action on $\tilde{X}$,  a  birational $G$-morphism
$p\colon  \tilde{X} \to X$, a non-empty $G$-stable open set $V \subset X$
such that $p\colon p^{-1}(V) \to V$ is an isomorphism
and a $G$-morphism $q\colon \tilde{X} \to Y$
such that $q$ and $\phi \circ p$ coincide on $p^{-1}(V)$.
If $X$ is normal and $Y/k$ is projective, then there exists such
an open set $V$ which contains all codimension 1 points of $X$.
\end{prop}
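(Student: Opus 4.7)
The natural approach is the standard graph--of--a--rational--map construction, adapted $G$-equivariantly.

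The plan is to form $Z := \overline{\Gamma_{\phi}} \subset X \times_{k} Y$, the scheme-theoretic closure of the graph $\Gamma_{\phi} \subset U \times_{k} Y$, and then take $\tilde{X}$ to be the normalization of $Z$. First I would equip $X \times_{k} Y$ with the diagonal $G$-action. Because $\phi$ is $G$-equivariant, $\Gamma_{\phi} = \{(u,\phi(u))\}$ is $G$-stable, and hence so is its closure $Z$ (the $G$-action is by $k$-automorphisms, so it preserves Zariski closures). Since $X$ is geometrically integral and $U \subset X$ is dense, $Z$ is integral and the first projection $p_{1}\colon Z \to X$ is birational. Normalization is functorial on integral schemes, so the $G$-action lifts to $\tilde{X}$; set $p = p_{1} \circ \nu$ and $q = p_{2} \circ \nu$ where $\nu\colon \tilde{X} \to Z$ is the normalization map. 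Both $p$ and $q$ are $G$-morphisms.

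For the open set $V$: $Y$ is separated, so $\Gamma_{\phi}$ is closed in $U \times_{k} Y$, which gives $Z \cap (U \times_{k} Y) = \Gamma_{\phi}$; hence $p_{1}$ restricts to an isomorphism $\Gamma_{\phi} \xrightarrow{\sim} U$. Let $V \subset U$ be the normal locus of $X$ (equivalently of $U$), intersected with $U$; this is a non-empty, $G$-stable open subset of $X$, because $G$ acts by automorphisms and the normal locus is intrinsic. Over $V$, the preimage in $Z$ is already normal (isomorphic to $V$), so $\nu$ is an isomorphism there and $p\colon p^{-1}(V) \to V$ is an isomorphism. Moreover, by construction $q$ agrees with $\phi \circ p$ on $p^{-1}(\Gamma_{\phi}) \supset p^{-1}(V)$.

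For the strengthened statement, assume $X$ is normal and $Y$ is projective. Then $p_{1}\colon Z \to X$ is proper (since $X \times_{k} Y \to X$ is proper), birational, and $X$ is normal, so by Zariski's Main Theorem $p_{1}$ is an isomorphism above the locus $V' \subset X$ where the rational map $X \dashrightarrow Y$ induced by $\phi$ is defined as a morphism. The classical extension theorem (a rational map from a normal variety to a projective variety is defined outside a closed subset of codimension $\geq 2$) shows that $V'$ contains every codimension 1 point of $X$. Since $\phi$ is $G$-equivariant, the domain of definition $V'$ is intrinsic and hence $G$-stable; taking $V = V'$ (intersected with the normal locus, which is all of $X$) completes the proof.

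The step requiring the most care is ensuring at every stage that the open sets and constructions are genuinely $G$-stable — this relies on $\Gamma_{\phi}$ being $G$-stable (forcing the closure and its normalization to carry a $G$-action) and on the fact that both the normal locus and the domain of definition of a rational map are intrinsically defined and hence preserved by the $G$-action. The other potential subtlety is verifying that the normalization of $Z$ exists as a $k$-variety and behaves well under the $G$-action, but this is standard once one uses that $Z$ is integral and of finite type over $k$.
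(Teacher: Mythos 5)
Your proposal is correct and follows essentially the same route as the paper's proof: take the closure $Z$ of the graph of $\phi$ in $X\times_k Y$ with the diagonal $G$-action, pass to the normalization (the paper lifts the $G$-action by hand via $G$-stable affine covers and integral equations, you invoke functoriality of normalization, which amounts to the same thing), take $V$ to be the normal locus over $U$, and for the last assertion use that a rational map from a normal variety to a projective variety is defined at all codimension~$1$ points. No gaps; the differences are only in the level of detail, not in the argument.
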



\begin{proof}
There is a $G$-action on the graph $\Gamma \subset U \times Y $
of the morphism  $U\to  Y$. The projection map $\Gamma \to U$ is an isomorphism.
Let $Z \subset X \times Y$ be the closure of this graph.
It is geometrically integral.
The action of $G$ extends to  $Z \subset X \times Y$.
The projection map $q\colon Z \to X$ is birational.
Over $U$, the map  $q = q^{-1}(U) \to U$ is an isomorphism.

Let $\tilde{Z}$ be the normalization of $Z$.
Since $G$ acts on $Z$, it acts on $\tilde{Z}$.
 Indeed, via Lemma \ref{Lemma 1}  this statement is reduced
 to the following statement.
Suppose we have an inclusion of integral commutative rings $A \subset B$
with same field of fraction $K$, with   $B$ normal and  integral over $A$.
Suppose we have an action of $G$ on $A$, hence on  $K$.
If we are given $g\in G$ and $b \in B$ satisfying an equation
$b^n+ \sum_{i=0}^{n-1} a_{i}b^{i}=0$,
then
$(gb)^n+ \sum_{i=0}^{n-1} a_{i}(gb)^{i}=0$.
Thus $gb \in B$.

One then takes $V$ to be the open set of normal points of $U$.

 If $X$ is normal and $Y/k$ is projective, then the given map
 $U \to Y$ extends to a $G$-morphism from an open set $U \subset X$
 which contains all codimension 1 points of $X$.
\end{proof}


\begin{prop} \label{Proposition 4}
   Let $k$ be a field. Let $G$ be an abstract finite group.
Let $X$, $Y$  be smooth, projective, geometrically integral $k$-varieties
with a $G$-action over $k$. Let $K/k$ be a Galois extension with group $\Gamma$,
finite or infinite. Let $X_K=X\times_kK$, $Y_K=Y\times_kK$ be equipped
with the natural action of $W=G\times \Gamma$.

Suppose there exists a $G$-birational equivalence
between $X$ and $Y$.
Then there exist $W$-permutation modules
$P_{1}$ and $P_{2}$ and a $W$-isomorphism
$$\Pic(X) \oplus P_{1} \simeq \Pic(Y) \oplus P_{2}.$$
\end{prop}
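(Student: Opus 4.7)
The plan is to dominate both $X$ and $Y$ by a common normal projective $G$-model $Z$ and to identify its Weil divisor class group $\mathrm{Cl}(Z_K)$ with $\Pic(X_K)$ (respectively $\Pic(Y_K)$) up to a $W$-permutation summand. Working with a normal rather than smooth covering has the advantage of bypassing equivariant resolution of singularities, which is unavailable over general fields.

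Step 1 is to take the closure $\Gamma_\varphi\subset X\times_kY$ of the graph of the given $G$-equivariant birational map $\varphi\colon X\dashrightarrow Y$ (supplied by Lemma~\ref{Lemma 2}), and let $Z$ be its normalization. By functoriality of normalization, the $G$-action lifts to $Z$, making it a normal, projective, geometrically integral $G$-variety over $k$, and the two projections give $G$-equivariant birational morphisms $p\colon Z\to X$ and $q\colon Z\to Y$. (Proposition~\ref{Proposition 3} provides an alternative route to the same object.) Base-changing to $K$ yields $W$-equivariant birational morphisms $p_K$ and $q_K$ from the normal $K$-variety $Z_K$ to the smooth $K$-varieties $X_K$ and $Y_K$.

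Step 2, the technical core, is to establish a $W$-equivariant split short exact sequence
\begin{equation*}
0\longrightarrow \mathrm{Exc}(p_K)\longrightarrow \mathrm{Cl}(Z_K)\xrightarrow{\,p_{K*}\,}\Pic(X_K)\longrightarrow 0,
\end{equation*}
where $\mathrm{Exc}(p_K)$ denotes the free abelian group on the prime $p_K$-exceptional Weil divisors of $Z_K$. The pullback $p_K^{*}\colon \Pic(X_K)\to \mathrm{Cl}(Z_K)$ (well-defined because $X_K$ is smooth, so Cartier and Weil agree there) splits $p_{K*}$ via the identity $p_{K*}p_K^{*}=\mathrm{id}$, which follows from the projection formula for the birational morphism $p_K$. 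Because $p_K$ is $W$-equivariant, $W$ permutes the prime exceptional divisors, so $\mathrm{Exc}(p_K)$ is a $W$-permutation module. This gives a $W$-isomorphism $\mathrm{Cl}(Z_K)\cong \Pic(X_K)\oplus \mathrm{Exc}(p_K)$, and the identical analysis for $q_K$ gives $\mathrm{Cl}(Z_K)\cong \Pic(Y_K)\oplus \mathrm{Exc}(q_K)$. Comparing the two identifications yields the required $W$-isomorphism
\begin{equation*}
\Pic(X_K)\oplus \mathrm{Exc}(p_K)\cong \Pic(Y_K)\oplus \mathrm{Exc}(q_K),
\end{equation*}
with $P_1=\mathrm{Exc}(p_K)$ and $P_2=\mathrm{Exc}(q_K)$ both $W$-permutation modules.

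The main obstacle will be the exactness of the displayed sequence, specifically the identification of $\ker(p_{K*})$ with $\mathrm{Exc}(p_K)$ at the level of divisor classes. The containment of $\mathrm{Exc}(p_K)$ in the kernel is immediate from cycle-theoretic pushforward; for the reverse, given a Weil divisor $D$ on $Z_K$ with $p_{K*}D=\mathrm{div}_{X_K}(f)$ principal, one replaces $D$ by $D-\mathrm{div}_{Z_K}(f)$ (using $k(Z_K)=k(X_K)$) to reduce to the Weil-divisor level, where the kernel of cycle-theoretic pushforward is manifestly spanned by $p_K$-exceptional primes. Injectivity of $\mathrm{Exc}(p_K)\hookrightarrow \mathrm{Cl}(Z_K)$ follows similarly, since any principal relation among exceptional primes must push forward to a principal relation on $X_K$, forcing it to be trivial.
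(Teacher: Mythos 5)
Your proposal is correct and follows essentially the same route as the paper: the paper constructs the common normal model (normalized graph closure, Proposition \ref{Proposition 3}) and then invokes the Moret-Bailly argument of \cite[Prop.~2.A.1]{CTS2}, which is precisely the split exact sequence $0\to \mathrm{Exc}\to \mathrm{Cl}(Z_K)\to \Pic(X_K)\to 0$ that you spell out. The only difference is that you write out explicitly the decomposition that the paper cites, so there is nothing to correct.
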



\begin{proof}
Once equipped with Proposition \ref{Proposition 3}, one may
 copy the proof
of  \cite[Prop.~2.A.1, p.~461/462]{CTS2}
(a proof due to Laurent Moret-Bailly) word for word.
One gets isomorphisms of $W$-modules
$$\Pic(X) \oplus P_{1} \simeq  CH^1(\tilde{Z}), $$
$$\Pic(Y) \oplus P_{2} \simeq  CH^1(\tilde{Z}),$$
where $CH^1(\tilde{Z})$ is the Chow group of codimension 1
cycles modulo rational equivalence on the normal variety $\tilde{Z}$ of Proposition \ref{Proposition 3}, and where $P_{1}$
and $P_{2}$ are permutation modules.
This gives a $W$-isomorphism
$$\Pic(X) \oplus P_{1} \simeq \Pic(Y) \oplus P_{2}.$$
\end{proof}


\begin{cl} \label{Corollary 5}
Let $X$, $Y$  be smooth, projective, geometrically integral $k$-varieties
with a $G$-action. Suppose there exists a $G$-birational equivalence
between $X \times \mathbb P(V_1)$ and $Y \times \mathbb P(V_2)$, where $G$ acts
on $\mathbb P(V_1)$ and $\mathbb P(V_2)$ via  homomorphisms $G \to \PGL(V_1)$ and $G\to \PGL(V_2)$, respectively. 
Then  there exist $W$-permutation modules
$P_{1}$ and $P_{2}$ and a $W$-isomorphism
$$\Pic(X_K) \oplus P_{1} \simeq \Pic(Y_K) \oplus P_{2}.$$
\end{cl}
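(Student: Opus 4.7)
The plan is to reduce Corollary 5 to Proposition 4 applied to the product varieties $X \times \mathbb P(V_1)$ and $Y \times \mathbb P(V_2)$. Both of these are smooth, projective, and geometrically integral (products of such over a field), and each carries a $k$-rational $G$-action obtained diagonally from the given actions on the factors. Hence Proposition 4 directly yields $W$-permutation modules $Q_1$ and $Q_2$ together with a $W$-isomorphism
$$
\Pic\bigl((X \times \mathbb P(V_1))_K\bigr) \oplus Q_1 \;\simeq\; \Pic\bigl((Y \times \mathbb P(V_2))_K\bigr) \oplus Q_2.
$$

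Next, I would analyze each side via the projective bundle formula. For any smooth projective geometrically integral $k$-variety $Z$ and any finite-dimensional $k$-vector space $V$, base change to $K$ and the formula for the Picard group of a product of varieties with a rational point give a natural decomposition
$$
\Pic\bigl((Z \times \mathbb P(V))_K\bigr) \;\simeq\; \Pic(Z_K) \oplus \Pic(\mathbb P(V)_K) \;\simeq\; \Pic(Z_K) \oplus \mathbb Z,
$$
and this decomposition is functorial in automorphisms of $Z$ and of $\mathbb P(V)$, hence is compatible with the $W$-action on both sides.

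The heart of the argument is to check that the $W$-action on the $\mathbb Z$ summand is trivial, so that it contributes a trivial permutation module and can be absorbed into $Q_1$ and $Q_2$. The Galois group $\Gamma$ acts trivially because $\mathbb P(V_i)$ is defined over $k$ and the class of $\mathcal O(1)$ is stable under Galois descent; the group $G$ acts trivially because $G \to \PGL(V_i)$ factors through automorphisms of $\mathbb P(V_i)$ induced by linear maps, and any $\PGL$-automorphism pulls back $\mathcal O(1)$ to $\mathcal O(1)$. (Equivalently, $\mathcal O(1)$ is the ample generator of $\Pic(\mathbb P(V_i)_K) \simeq \mathbb Z$ and is therefore fixed by all automorphisms.)

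Combining these observations, the isomorphism from Proposition 4 becomes
$$
\Pic(X_K) \oplus \mathbb Z \oplus Q_1 \;\simeq\; \Pic(Y_K) \oplus \mathbb Z \oplus Q_2,
$$
which is of the required shape upon setting $P_1 := \mathbb Z \oplus Q_1$ and $P_2 := \mathbb Z \oplus Q_2$, both of which are permutation $W$-modules. I expect no substantive obstacle: the only points that need genuine verification are the splitting of the Picard group of a product (for which one uses geometric integrality together with the existence of a $K$-point on each factor after passing to a large enough extension, or equivalently the projective bundle formula) and the triviality of the $W$-action on the $\mathbb Z$ factor, which is a standard consequence of $\PGL$-equivariance.
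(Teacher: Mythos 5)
Your proposal is correct and is essentially the paper's own argument: apply Proposition \ref{Proposition 4} to the $G$-birationally equivalent products $X\times\mathbb P(V_1)$ and $Y\times\mathbb P(V_2)$, use the decomposition $\Pic\bigl((Z\times\mathbb P(V))_K\bigr)\simeq\Pic(Z_K)\oplus\mathbb Z$ with trivial $W$-action on the $\mathbb Z$-summand, and absorb that summand into the permutation modules. The only difference is that you spell out the verifications (functoriality of the splitting and triviality of the action on $\mathcal O(1)$) that the paper leaves implicit.
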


\begin{proof}
The action of $G$ on $\Pic(\mathbb P(V_i)) = \mathbb Z$ ($i=1,2$)
is the trivial one. We have natural $G$-isomorphisms
$$ \Pic(X \times \mathbb P(V_1)) \simeq  \Pic(X) \oplus \Pic(\mathbb P(V_1))  =  \Pic(X) \oplus \mathbb Z,$$
and similarly for $ \Pic(Y \times \mathbb P(V_2))$.  It remains to go over to $X_K$, $Y_K$ and
apply Proposition \ref{Proposition 4}.
\end{proof}


\begin{rk} If $\Char k=0$ and resolution of singularities is available, there is a shorter proof.
One can use weak factorization (which is functorial, over any field of characteristic zero, and in presence of group actions):
every birational map can be factored in a sequence of blowups and blowdowns with centres in smooth subvarieties, this implies
the required statement;
cf. \cite[Proposition~2.5]{BP}.
\end{rk}

Setting $K=\bar k$ in Corollary \ref{Corollary 5}, we arrive at the statement of Theorem \ref{th:crit}. \qed

\section{Real plane} \label{sec:real}

In this section, we show some applications of Theorem \ref{th:crit}. Namely,
it can be viewed as a purely algebraic and computationally feasible alternative
to some more sophisticated tools such as classification of Sarkisov links used
in the study of birational involutions of real plane \cite{CMYZ} and other problems
of a similar flavour. Moreover, in the absence of such a classification for higher-dimensional
varieties,  Theorem \ref{th:crit} may turn out to be the only available tool for
proving the non-linearizability.

Below we present a simple example of such an application.

Consider the following question.

\begin{Qu} \label{q:real-inv}
What are elements of order 2 in $\Cr_2(\mathbb R)$ that are $\mathbb C$-linearizable
but not $\mathbb R$-linearizable?
\end{Qu}

This question was answered in \cite{CMYZ} as a consequence of the classification of
birational involutions of real plane up to conjugacy. According to the Main Theorem in
1.E and its proof in Section 8 (particularly, Lemma 8.2 and Section 8.B), any involution
as in Question \ref{q:real-inv} is conjugate either to one of the two involutions acting
on a quadric of signature (3,1) or (2,2), or to one of the Trepalin involutions
(subdivided into three infinite families), each thereof acting on a conic bundle with
even number (at least 4) of degenerate fibres.

Below we present in some detail an alternative proof of the non-linearizability for
one of these families (as said, the original proof in \cite{CMYZ} is based on subtle
techniques from equivariant birational geometry), providing a little stronger result.

\begin{Ex} \label{Trepalin}
Let $n\ge 1$, and let $Z_n$ denote the surface in $\mathbb P^2_{\mathbb R}\times \mathbb P^1_{\mathbb R}$
given by
\begin{equation} \label{Zn}
x^2\prod_{k=1}^{2n}(t^2_0+k^2t^2_1) + y^2t^{4n}_0 +z^2t^{4n}_1 =0,
\end{equation}
where $([x:y:z], [t_0:t_1])$ denote the coordinates in $\mathbb P^2_{\mathbb R}\times \mathbb P^1_{\mathbb R}$.

This surface was first considered by Trepalin \cite{Tr} and then studied
in \cite{Ya2} and \cite{CMYZ}. We reproduce some of its properties described in
\cite[Example~1.4]{Ya2} (with minor corrections).

$\bullet$ The projection onto $\mathbb P^1_{\mathbb R}$ defines on $Z_n$
a structure of a conic bundle with $r:=4n+2$ degenerate fibres at the points
$p_k=[k\sqrt{-1}:1]$, $p_{-k}=[-k\sqrt{-1}:1]$ ($k=1,\dots ,2n$), $p_0=[0:1]$, $p_{\infty}=[1:0]$.
The set $D$ of the components of the degenerate fibres consists of the lines \{$l_k$, $\bar l_k$\},
\{$l_{-k}$, $\bar l_{-k}$\}, $k=1, \dots ,2n$,
of the form $z=\pm k^{4n}\sqrt{-1}y$ (for the first
$4n$ fibres), \{$l_0$, $\bar l_0$\} given by $z=\pm ((2n)!)^2\sqrt{-1}x$ (for the fibre at $p_0$),
and \{$l_{\infty}$, $\bar l_{\infty}$\} given by $y=\pm \sqrt{-1}x$ (at $p_{\infty}$).
The group $\Gamma=\Gal (\mathbb C/\mathbb R)=\left<\sigma\right>$ acts on $D$ by swapping
$l_{k}$ and $\bar l_{-k}$ ($k=\pm 1, \dots ,\pm 2n$), $l_0$ and $\bar l_0$, $l_{\infty}$ and $\bar l_{\infty}$.
Since the lines in each of the first $4n$ orbits of $\Gamma$ do not intersect, we can blow down the first $4n$
fibres over $\mathbb R$ and obtain a real conic bundle with two degenerate fibres, which is $\mathbb R$-rational
by a theorem of Iskovskikh \cite[Theorem~4.1]{Is}.  Hence $Z_n$ is also $\mathbb R$-rational.

$\bullet$ The involution of $\mathbb P^1_{\mathbb R}$ defined by $[t_0:t_1] \mapsto [-t_0:t_1]$
gives rise to an $\mathbb R$-regular involution $g_n$ of $Z_n$. Denote $G_n=\left<g_n\right>$.
The group $G_n$ acts on $D$
by swapping $l_{k}$ and $l_{-k}$, $\bar l_{k}$ and $\bar l_{-k}$ ($k=1, \dots ,2n$), and fixing the
remaining four lines. Since $Z_n$ is $\mathbb R$-rational, $G_n$ corresponds to a subgroup of order two in
$\Cr_2(\mathbb R)$, for which we will use the same notation.

\begin{Prop} \label{prop:real}
The subgroup $G_n\subset\Cr_2(\mathbb R)$ is $\mathbb C$-linearizable but not stably projectively
$\mathbb R$-linearizable. For $n\ne m$ the subgroups $G_n$ and $G_m$ are not
stably conjugate.

\begin{proof}
Denote $N_n=\Pic (Z_n\times_{\mathbb R}\mathbb C)$. This is a free $\mathbb Z$-module
of rank $r+2$ where $r=4n+2$ is the number of degenerate fibres of the conic bundle.
One can choose as generators of $N$ (the classes of) a section $s$, the general fibre $l$,
and one component of each of the $r$ degenerate fibres (not meeting $s$), say, $l_k$ ($k=\pm 1,\dots ,\pm 2n$),
$l_0$, $l_{\infty}$ (see, e.g. \cite[Proposition~0.4]{KuTs} for more details). As to the
additional component $\bar l_k$ of each degenerate fibre, in $N$ we have $\bar l_k=l-l_k$.

The group $G_n$ acts on $N_n$ by swapping $l_k$ and $l_{-k}$ and fixing all the remaining elements of
the chosen free basis, so that $N$ is a permutation $G_n$-module and is hence flasque and coflasque. Therefore $G_n$ is
$\mathbb C$-linearizable by a theorem of Bogomolov and Prokhorov \cite[Corollary~1.2]{BP}.

The group $\Gamma$ acts on $N$ as follows: $l_k\mapsto l-l_{-k}$ ($k=\pm 1,\dots ,\pm 2n$),
$l_0\mapsto l-l_0$, $l_{\infty}\mapsto l-l_{\infty}$, $l$ is fixed. As to $s$, we shall use
the formula from \cite[page 586]{KuTs}:
$$
2(\sigma (s)-s) = \sum_{i=1}^r (\sigma (L_i)-L_i),
$$
where $L_i$ is a component of the $i^{\textrm th}$ degenerate fibre. In our notation this gives
$$
\sigma (s) = s + (2n+1)l - \sum_{k=1}^{2n}(l_k+l_{-k}) - l_0 -l_{\infty}.
$$

Since $H^1(G_n,N_n)=0$, the restriction-inflation sequence for the group $W_n=G_n\times \Gamma$ gives
$H^1(W_n,N_n)=H^1(G_n,N_n^{\Gamma})$.
We have
$$
\begin{aligned}
N_n^{\Gamma} & =\left<2s - \sum_{k=1}^{2n}(l_k+l_{-k}) - l_0 -l_{\infty}, l, l_1-l_{-1}, \dots ,l_{2n} - l_{-2n}\right> \\
 & =: \left<\tilde s, \tilde l, \tilde l_1, \dots , \tilde l_{2n}\right>.
\end{aligned}
$$

The action of $G_n$ on $N_n^{\Gamma}$ fixes $\tilde s$ and $\tilde l$ and maps each $\tilde l_k$ to $-\tilde l_k$.
Hence we obtain $H^1(W_n,N_n)=(\mathbb Z/2)^{2n}$. Thus the $W_n$-module $N_n$ is not stably permutation and hence
by Theorem \ref{th:main}, we conclude that $G_n$ is not stably projectively $\mathbb R$-linearizable. By the same theorem,
for $n\ne m$ the subgroups $G_n$ and $G_m$ are not stably conjugate in $\Cr_2(\mathbb R)$ because the modules
$N_n$ and $N_m$ are not similar having different first cohomology.
\end{proof}

\end{Prop}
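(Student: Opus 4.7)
The plan is to follow the blueprint indicated by Theorem \ref{th:crit} and the Bogomolov--Prokhorov criterion: reduce both linearizability assertions to concrete $W_n$-module computations in $N_n=\Pic(Z_n\times_{\mathbb R}\mathbb C)$, and then show that the sequence of cohomology groups $H^1(W_n,N_n)$ distinguishes the $G_n$ from one another.

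First, I would make the Picard lattice $N_n$ explicit. Since $Z_n$ is a smooth conic bundle over $\mathbb P^1$ with $r=4n+2$ degenerate fibres, the module $N_n$ is free of rank $r+2$, generated by the class of a section $s$, the class of a general fibre $l$, and one component $L_i$ of each degenerate fibre (chosen not to meet $s$). In our situation these components are $l_{\pm k}$ for $k=1,\dots,2n$ together with $l_0,l_\infty$, the other components being $\bar l_i=l-L_i$. The $G_n$-action, read off the geometric description given in the example, swaps $l_k\leftrightarrow l_{-k}$ and fixes $s$, $l$, $l_0$, $l_\infty$. Thus $N_n$ is visibly a permutation $G_n$-module. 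Invoking \cite[Corollary~1.2]{BP}, this already yields the $\mathbb C$-linearizability of $G_n$.

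Next I would write down the $\Gamma$-action. On the fibre components $\Gamma=\langle\sigma\rangle$ swaps $l_k\leftrightarrow \bar l_{-k}$ (hence $l_k\mapsto l-l_{-k}$), $l_0\leftrightarrow \bar l_0$, $l_\infty\leftrightarrow \bar l_\infty$, and fixes $l$. The only delicate point is the action on $s$, and here I would use the standard formula $2(\sigma(s)-s)=\sum_{i=1}^r(\sigma(L_i)-L_i)$ (\cite[p.~586]{KuTs}) to obtain
\[
\sigma(s)=s+(2n+1)\,l-\sum_{k=1}^{2n}(l_k+l_{-k})-l_0-l_\infty.
\]
This is the one computational step I expect will need the most care, as it requires divisibility by $2$ inside $N_n$ to be checked consistently with the chosen basis.

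With both actions in hand, the rest is formal. I would compute the $\Gamma$-invariant sublattice $N_n^{\Gamma}$, finding the basis $\tilde s=2s-\sum(l_k+l_{-k})-l_0-l_\infty$, $\tilde l=l$, $\tilde l_k=l_k-l_{-k}$ ($k=1,\dots,2n$). The $G_n$-action fixes $\tilde s,\tilde l$ and negates each $\tilde l_k$. Since $N_n$ is a permutation $G_n$-module we have $H^1(G_n,N_n)=0$, so the Hochschild--Serre (restriction--inflation) sequence for $1\to\Gamma\to W_n\to G_n\to 1$ collapses to
\[
H^1(W_n,N_n)=H^1\bigl(G_n,N_n^{\Gamma}\bigr)=(\mathbb Z/2)^{2n},
\]
the last equality by direct computation on the explicit basis above. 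Since a stably permutation module has vanishing $H^1$ for every subgroup, Theorem \ref{th:crit} rules out stable projective $\mathbb R$-linearizability of $G_n$. Moreover, because $H^1(W_n,N_n)\not\cong H^1(W_m,N_m)$ for $n\ne m$, similarity of the modules fails and Theorem \ref{th:crit} again yields that $G_n$ and $G_m$ are not stably conjugate in $\Cr_2(\mathbb R)$.

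The main obstacle is the bookkeeping in the second paragraph: one must verify that the formula for $\sigma(s)$ lands in $N_n$ (i.e.\ that the right-hand side of $2(\sigma(s)-s)=\sum(\sigma(L_i)-L_i)$ is genuinely divisible by $2$ for the chosen components), and that the basis of $N_n^\Gamma$ proposed above is correct and complete. Once this is settled the cohomology computation is routine and the Proposition follows by combining Bogomolov--Prokhorov with Theorem \ref{th:crit}.
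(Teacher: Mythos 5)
Your proposal is correct and follows essentially the same route as the paper's own proof: the explicit Picard basis, the permutation $G_n$-structure giving $\mathbb C$-linearizability via Bogomolov--Prokhorov, the formula $2(\sigma(s)-s)=\sum(\sigma(L_i)-L_i)$ for the Galois action on $s$, the inflation--restriction computation $H^1(W_n,N_n)\cong(\mathbb Z/2)^{2n}$, and the conclusion via Theorem \ref{th:crit}. The only (shared, harmless) wrinkle is that the vanishing $H^1(G_n,N_n)=0$ strictly speaking collapses the inflation--restriction sequence for the quotient $\Gamma$ (giving $H^1(W_n,N_n)\cong H^1(\Gamma,N_n^{G_n})$), while the displayed identification uses the sequence for the quotient $G_n$; both computations yield $(\mathbb Z/2)^{2n}$, so the argument stands as written.
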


\end{Ex}

\section{Concluding remarks} \label{sec:concl}

We finish by collecting several miscellaneous remarks and questions.

\begin{Rk}
We do not know an answer to Question \ref{q:main} in the case $n=2$,
even for cyclic $G$, even for $G$ of order 2. Hopefully, one can somehow
use the available classification results \cite{DI}, \cite{PSY}.

We do not know anything about the local-global behaviour of elements of odd order
in Cremona groups.
\end{Rk}

\begin{Rk}
Somewhat surprisingly, it is not clear how the linearization properties behave
with respect to Weil's functor of restriction of scalars. In particular,
an affirmative answer to the following question could simplify the argument
at Step 4 of the proof of Theorem \ref{th:main}.
\end{Rk}

\begin{Qu}
Let $L/k$ be a finite field extension.
Suppose that the action of a group $G$ on a smooth, projective $L$-variety $X$
is (projectively) $L$-linearizable. Is the induced $G$-action on $R_{L/k}X$
(projectively) $k$-linearizable?
\end{Qu}

The following partial answer was communicated to me by Evgeny Shinder.

\begin{Prop} (Shinder)
Let $L/k$ be a finite field extension.
Suppose that the action of a finite group $G$ on $X$
is $L$-linearizable. Then the induced $G$-action on $R_{L/k}X$
is stably $k$-linearizable.
\end{Prop}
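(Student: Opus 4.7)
The strategy is to push the given $L$-linearization of $X$ through the Weil restriction functor, using the identification $R_{L/k}\mathbb{A}(V) = \mathbb{A}(V_k)$ (with $V_k$ denoting $V$ regarded as a $k$-vector space with its inherited $k$-linear $G$-action) together with the no-name lemma to bridge the dimension gap between $R_{L/k}\mathbb{P}(V)$ and $\mathbb{P}(V_k)$.

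First I would reduce to the universal case. By hypothesis there is a $G$-birational equivalence $X \sim_G \mathbb{P}(V)$ over $L$ for some $L$-linear $G$-representation $V$. Since Weil restriction preserves open immersions, isomorphisms, products, and $G$-actions (as used at Step~4 of the proof of Theorem~\ref{th:main}), and since density of $R_{L/k}U$ in $R_{L/k}Z$ for a dense open $U\subset Z$ can be checked after base change to a separable closure of $k$ (where $R_{L/k}$ becomes a finite product), applying $R_{L/k}$ to the $G$-stable opens witnessing the equivalence produces a $G$-birational equivalence $R_{L/k}X \sim_G R_{L/k}\mathbb{P}(V)$ over $k$. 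It therefore suffices to prove that $R_{L/k}\mathbb{P}(V)$ is stably $k$-linearizable.

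Second, I would use the no-name lemma (Lemma~\ref{lem:noname}) to obtain a $G$-birational equivalence
\[
\mathbb{A}(V)\;\sim_G\;\mathbb{P}(V)\times\mathbb{A}^1_L
\]
over $L$, with trivial $G$-action on the $\mathbb{A}^1_L$-factor. (One route: the $G$-fixed point $[0:1]\in\mathbb{P}(V\oplus L)$, with the trivial action on the added line, has blow-up equal to the $\mathbb{P}^1$-bundle $\mathbb{P}(\mathcal{O}_{\mathbb{P}(V)}\oplus\mathcal{O}_{\mathbb{P}(V)}(-1))$ over $\mathbb{P}(V)$, which by the no-name lemma is $G$-birational to $\mathbb{P}(V)\times\mathbb{P}^1_L$ with trivial action on the second factor; restricting to the dense open complements of $\mathbb{P}(V)\subset\mathbb{P}(V\oplus L)$ at infinity yields the claim. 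The generic freeness of the $G$-action on $\mathbb{P}(V)$ is inherited from the faithful action on $X$.) Applying $R_{L/k}$ and invoking $R_{L/k}\mathbb{A}^n_L=\mathbb{A}^{n[L:k]}_k$ yields
\[
\mathbb{A}(V_k)\;=\;R_{L/k}\mathbb{A}(V)\;\sim_G\;R_{L/k}\mathbb{P}(V)\times\mathbb{A}^{[L:k]}_k
\]
over $k$, with trivial $G$-action on the $\mathbb{A}^{[L:k]}_k$-factor and the natural $k$-linear action on $V_k$.

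Finally, passing to smooth projective $G$-compactifications, $\mathbb{A}(V_k)$ embeds as the complement of a $G$-fixed hyperplane in $\mathbb{P}(V_k\oplus k)$ (trivial action on the added line), while $R_{L/k}\mathbb{P}(V)\times\mathbb{A}^{[L:k]}_k$ is dense open in $R_{L/k}\mathbb{P}(V)\times\mathbb{P}^{[L:k]}_k$ (trivial action on the projective factor). Hence
\[
R_{L/k}\mathbb{P}(V)\times\mathbb{P}^{[L:k]}_k\;\sim_G\;\mathbb{P}(V_k\oplus k),
\]
and the right-hand side is $k$-linear by construction. This gives the stable $k$-linearizability of $R_{L/k}\mathbb{P}(V)$, and hence of $R_{L/k}X$. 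The main technical point is verifying that Weil restriction preserves $G$-birational equivalence, which reduces to the preservation of open immersions and products by $R_{L/k}$ together with the density check above; the dimension identity $\dim_k(R_{L/k}\mathbb{P}(V)\times\mathbb{P}^{[L:k]}_k) = [L:k]\dim_L V = \dim_k\mathbb{P}(V_k\oplus k)$ provides a useful sanity check and shows why the argument produces only stable, rather than plain, $k$-linearizability.
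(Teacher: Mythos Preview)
Your argument is correct and is essentially the paper's proof: both reduce to $X=\mathbb{P}(V)$, then use the tautological line bundle together with the no-name lemma to identify $\mathbb{A}(V_k)=R_{L/k}\mathbb{A}(V)$ with $R_{L/k}\mathbb{P}(V)\times\mathbb{A}^{[L:k]}_k$ up to $G$-birational equivalence, and compactify. The only difference is the order of operations---you apply no-name over $L$ to the line bundle $\mathcal{O}(-1)$ and then Weil-restrict the resulting birational equivalence, whereas the paper first Weil-restricts $Tot(\mathcal{O}_{\mathbb{P}(V)}(-1))$ to a rank-$[L:k]$ $G$-linearized vector bundle over $R_{L/k}\mathbb{P}(V)$ (citing \cite{Bri1} for this step) and then applies the affine no-name lemma over $k$.
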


\begin{proof}
We will use the no-name lemma in the following form \cite[Lemma~4.4]{CGR}:
\begin{lm} \label{lem:noname-CGR}
If $G$ is a finite group acting faithfully on $X$ and $\mathcal E\to X$ is a $G$-linearized
vector bundle of relative dimension $d$, then $\mathcal E$ is $G$-birational to $X\times \mathbb A^d$
with trivial action on the second factor.
\end{lm}

To prove the proposition, we can assume $X = \mathbb P^n_L$ with a $G$-action induced from a linear
$G$-representation on $\mathbb A^{n+1}_L$. Denote $d = [L:k]$. We have an induced $G$-equivariant rational map
$$
\mathbb A_k^{(n+1)d} = R_{L/k}\mathbb A_L^{n+1} \dashrightarrow R_{L/k}Tot(\mathcal O_{\mathbb P_L^n}(-1)) \to  R_{L/k}\mathbb P_L^n.
$$
Here $R_{L/k}Tot(\mathcal O_{\mathbb P_L^n}(-1))$ is a $G$-linearized vector bundle over $R_{L/k}\mathbb P_L^n$.
(Weil's descent of a $G$-linearizable vector bundle is a $G$-linearizable vector bundle, see \cite[Lemma~2.2]{Bri1}.)
Thus by Lemma \ref{lem:noname-CGR} we have a $G$-birational map
$$
R_{L/k}Tot(\mathcal O_{\mathbb P_L^n}(-1)) \dashrightarrow R_{L/k}\mathbb P_L^n \times \mathbb A_k^d.
$$
Therefore $R_{L/k}\mathbb P_L^n$ is stably $k$-linearizable.
\end{proof}

\medskip

\noindent{\it Acknowledgement.} I thank Mikhail Borovoi, Ivan Cheltsov, Mathieu Florence, Yuval Ginosar, Evgeny Shinder,
Yuri Tschinkel, and Michael Tsfasman for useful discussions and correspondence. My special thanks go to Jean-Louis
Colliot-Th\'el\`ene whom I owe Section \ref{sec:crit} as well as numerous critical remarks on the earlier versions 
of this text. I am also grateful to the referees for careful reading and helpful comments.

\enddocument
\begin{thebibliography}{BCTSSD}

\bibitem[ACKM]{ACKM}
H. Abban, I. Cheltsov, T. Kishimoto, F. Mangolte,
\emph{K-stability of Fano 3-folds in the World of Null-A},
\url{arXiv:2505.04330}.


\bibitem[BCTSSD]{BCTSSD}
A. Beauville,  J.-L. Colliot-Th\'el\`ene, J.-J. Sansuc, P. Swinnerton-Dyer,
\emph{Vari\'et\'es stablement rationnelles non rationnelles},
Ann. Math. {\bf 121} (1985), 283--318.


\bibitem[BP]{BP}
F. Bogomolov, Yu. Prokhorov,
\emph{On stable conjugacy of finite subgroups of the plane Cremona group}, I,
Cent. Eur. J. Math. {\bf 11} (2013), 2099--2105.

\bibitem[BvBTH]{BvBTH}
Ch. B\"ohning, H.-Ch. Graf von Bothmer, Yu. Tschinkel (with an appendix by B. Hassett),
\emph{Equivariant birational geometry of cubic fourfolds and derived categories},
Adv. Math. {\bf 469} (2025), 110249.

\bibitem[BvBT]{BvBT}
Ch. B\"ohning, H.-Ch. Graf von Bothmer, Yu. Tschinkel,
\emph{Equivariant birational types and derived categories},
Math. Nachr. {\bf 297} (2024), 4333--4355.

\bibitem[Bri1]{Bri1}
M. Brion,
\emph{Algebraic group actions on normal varieties},
Trans. Moscow Math. Soc. {\bf 2017}, 85--107.

\bibitem[Bri2]{Bri2}
M. Brion,
\emph{On models of algebraic group actions},
Proc. Indian Acad. Sci. (Math. Sci.)
{\bf 132} (2022), 1--17.


\bibitem[Bru]{Bru}
M. Brunella,
\emph{Minimal models of foliated algebraic surfaces},
Bull. Soc. Math. France {\bf 127} (1999), 289--305.

\bibitem[Ca]{Ca}
S. Cantat,
\emph{The Cremona group},
in: ``Algebraic Geometry: Salt Lake City 2015'',
Proc. Sympos. Pure Math., vol.~97.1,
Amer. Math. Soc., Providence, RI, 2018, pp.~101--142.


\bibitem[Ch]{Ch}
I. Cheltsov,
\emph{Regularization of birational automorphisms},
Mat. Zametki {\bf 76} (2004), 286--299;
English transl. in
Math. Notes {\bf 76} (2004), 264--275.

\bibitem[CMYZ]{CMYZ}
I. Cheltsov, F. Mangolte, E. Yasinsky, S. Zimmermann,
\emph{Birational involutions of the real projective plane},
J. Eur. Math. Soc. 28 (2026), 3653--3711. 

\bibitem[CTZ]{CTZ}
I. Cheltsov, Yu. Tschinkel, Zh. Zhang,
\emph{Equivariant geometry of the Segre cubic and the Burkhardt quartic},
Selecta Math. {\bf 31} (2025), article no.~7.

\bibitem[CGR]{CGR}
V. Chernousov, P. Gille, Z. Reichstein,
\emph{Resolving $G$-torsors by abelian base extensions},
J. Algebra {\bf 296} (2006), 561--581.


\bibitem[CTHS]{CTHS}
J.-L. Colliot-Th\'el\`ene, D. Harari, A. N. Skorobogatov,
\emph{Compactification \'equivariante d'un tore (d'apr\`es Brylinski et K\"unnemann)},
Expo. Math. {\bf 23} (2005), 161--170.

\bibitem[CTS1]{CTS1}
J.-L. Colliot-Th\'el\`ene, J.-J. Sansuc,
\emph{La R-\'equivalence sur les tores},
Ann. Sci. \'Ec. Norm. Sup. {\bf 10} (1977), 175--229.

\bibitem[CTS2]{CTS2}
J.-L. Colliot-Th\'el\`ene, J.-J. Sansuc,
\emph{La descente sur les vari\'et\'es rationnelles}, II,
Duke Math. J. {\bf 54} 1987, 375--492.

\bibitem[CTS3]{CTS3}
J.-L. Colliot-Th\'el\`ene, J.-J. Sansuc,
\emph{Principal homogeneous spaces under flasque tori, applications},
J. Algebra {\bf 106} (1987), 148--205.

\bibitem[CTS4]{CTS4}
J.-L. Colliot-Th\'el\`ene, J.-J. Sansuc,
\emph{The rationality problem for fields of invariants under linear algebraic groups (with  special regards to the rationality problem)}, 
in ``Proc. Intern. Colloquium on Algebraic Groups and Homogeneous Spaces (Mumbai 2004)'', V.~Mehta ed.,  TIFR Mumbai, Narosa Publishing House, 
2007, pp.~113--186.  

\bibitem[DI]{DI}
I. V. Dolgachev, V. A. Iskovskikh,
\emph{On elements of prime order in the plane Cremona group over a perfect field},
Intern. Math. Res. Notices {\bf 2009}, 3467--3485.


\bibitem[HT]{HT}
B. Hassett, Yu. Tschinkel,
\emph{Torsors and stable equivariant birational geometry},
Nagoya Math. J. {\bf 250} (2023), 275--297.

\bibitem[HY]{HY}
A. Hoshi, A. Yamasaki,
\emph{Birational classification for algebraic tori},
\url{arXiv:2112.02280}.


\bibitem[Is]{Is}
V. A. Iskovskikh,
\emph{Rational surfaces with a sheaf of rational curves and with a positive square of canonical class},
Mat. Sb. {\bf 88} (1970), 90--119;
English transl. in Math. USSR Sb. {\bf 12} (1970), 91--117.

\bibitem[Ka]{Ka}
P. Katsylo,
{\it On the birational classification of linear representations},
preprint MPI/92-1, \url{https://archive.mpim-bonn.mpg.de/id/eprint/2889/1/preprint_1992_1.pdf}.

\bibitem[Kr]{Kr}
A. Kresch, paper in preparation.

\bibitem[KrT]{KrT}
A. Kresch, Yu. Tschinkel,
{\em Equivariant unirationality of toric varieties},
\url{arXiv:2506.07152}.

\bibitem[Ku1]{Ku1}
B. \`E.  Kunyavskiı\u\i .
\emph{Three-dimensional algebraic tori},
in ``Investigations in number theory'', Saratov. Gos. Univ.,
Saratov, 1987, pp.~90--111;
English  transl. in
Selecta Math. Soviet. {\bf 9}  (1990), 1--21.

\bibitem[Ku2]{Ku2}
B. Kunyavski\u\i ,
\emph{Tori and surfaces violating a local-to-global principle for rationality},
Comptes Rendus Math. {\bf 362} (2024), 841--849.

\bibitem[KuTs]{KuTs}
B. \`E. Kunyavski\u\i , M. A. Tsfasman,
\emph{Zero-cycles on rational surfaces and N\'eron--Severi tori},
Izv. Akad. Nauk SSSR Ser. Mat. {\bf 48} (1984), 631--654;
English transl. in Math. USSR Izv. {\bf 24} (1985), 583--603.

\bibitem[LPR]{LPR}
N. Lemire, V. L. Popov, Z. Reichstein,
\emph{Cayley groups},
J. Amer. Math. Soc. {\bf 19} (2006), 921--967.

\bibitem[LS]{LS}
H.-Y. Lin, E. Shinder,
\emph{Motivic invariants of birational maps},
Ann. Math. {\bf 199} (2024), 445--478.

\bibitem[Ma]{Ma}
Ju. I. Manin,
\emph{Rational surfaces over perfect fields},
Inst. Hautes \'Etudes Sci. Publ. Math. {\bf 30} (1966), 55--97 (in Russian).

\bibitem[NSW]{NSW}
J. Neukirch, A. Schmidt, K. Wingberg,
\emph{Cohomology of Number Fields}, Springer, Berlin, 2000.

\bibitem[PSY]{PSY}
A. Pinardin, A. Sarikyan, E. Yasinsky,
\emph{Linearization problem for finite subgroups of the plane Cremona group},
\url{arXiv:2412.12022}.


\bibitem[Pr]{Pr}
Yu. Prokhorov,
\emph{On stable conjugacy of finite subgroups of the plane Cremona group}, II,
Michigan Math. J. {\bf 64} (2015), 293--318.

\bibitem[RY]{RY}
Z. Reichstein, B. Youssin,
\emph{A birational invariant for algebraic group actions},
Pacific J. Math. {\bf 204} (2002), 223--246.

\bibitem[Sa]{Sa}
D. Saltman,
\emph{Retract rational fields and cyclic Galois extensions},
Israel J. Math. {\bf 47} (1984), 165--215.




\bibitem[St]{St}
The Stacks project,
\url{https://stacks.math.columbia.edu}.


\bibitem[Tr]{Tr}
A. Trepalin,
\emph{Quotients of conic bundles},
Transform. Groups {\bf 21} (2016), 275--295.


\bibitem[Vo]{Vo}
V. E. Voskresenski\u\i ,
\emph{Algebraic groups and their birational invariants},
Translations of Math. Monographs, vol.~179,
Amer. Math. Soc., Providence, RI, 1998.


\bibitem[Ya1]{Ya1}
E. Yasinsky,
\emph{Subgroups of odd order in the real plane Cremona group},
J. Algebra {\bf 461} (2016), 87--120.

\bibitem[Ya2]{Ya2}
E. Yasinsky,
\emph{Automorphisms of real del Pezzo surfaces and the real plane Cremona group},
Ann. Inst. Fourier {\bf 72} (2022), 831--899.

\bibitem[Yu]{Yu}
C. F. Yu,
\emph{Chow’s theorem for semi-abelian varieties and bounds for splitting
fields of algebraic tori},
Acta Math. Sinica, English Ser. {\bf 35} (2019), 1453--1463.

\end{thebibliography}
